\def\Div{{\rm div\,}}
\def\rot{{\rm rot}\,}
\def\R{\rm R}
\def\N{\mathcal{N}}
\newcommand{\al}{\alpha}
\def\Ga{\Gamma}
\newcommand{\eps}{\varepsilon}
\newcommand{\va}{\varphi}
\def\si{\sigma}
\def\De{\Delta}
\def\nb{\nabla}
\def\eps{\varepsilon}
\def\Om{\Omega}
\def\om{\omega}
\def\pa{\partial}
\def\Or{\int_{\R^3}}
\def\ra{\rightarrow}
\def\nn{\nonumber}
\newcommand{\benn}{\begin{eqnarray*}}
\newcommand{\eenn}{\end{eqnarray*}}
\newcommand{\ben}{\begin{eqnarray}}
\newcommand{\een}{\end{eqnarray}}
\newcommand{\bal}{\begin{aligned}}
\newcommand{\eal}{\end{aligned}}
\theoremstyle{plain}
\newtheorem{lemma}{Lemma}
\newtheorem{theorem}{Theorem}
\newtheorem{remark}[lemma]{Remark}
\renewcommand{\thesection}{\arabic{section}}
\newtheorem{corollary}[lemma]{Corollary}
\newcommand{\appsection}[1]{\let\oldthesection\thesection
  \renewcommand{\thesection}{Appendix \oldthesection}
  \section{#1}\let\thesection\oldthesection}
\def\\{\hfil\break}
\def\N{{\Bbb N}}
\def\R{{\Bbb R}}
\title[NS regularity criteria]{On some regularity criteria for axisymmetric Navier-Stokes equations.}
\author[J.Renc{\L}awowicz \and W. M. Zaj\c{a}czkowski]{Joanna Renc{\L}awowicz
\and Wojciech M. Zaj\c{a}czkowski}
\keywords{Navier-Stokes equation,  regularity criteria,
 regular solutions}
\thanks{Institute of Mathematics, Polish Academy of
Sciences, \'{S}nia\-dec\-kich 8, 00-656 Warsaw, Poland, e-mail:
jr@impan.gov.pl}
\thanks{Institute of Mathematics, Polish Academy of
Sciences, \'{S}niadeckich 8, 00-656 Warsaw, Poland, and Institute of Mathematics and Cryptology,
Military University of Technology, Kaliskiego 2, 00-908 Warsaw,
Poland, e-mail:
wz@impan.gov.pl}
\thanks{\today}
\begin{document}


\begin{abstract}
We point out some criteria that imply regularity of axisymmetric solutions to Navier-Stokes
equations. We show that boundedness of $\|{v_{r}}/{\sqrt{r^3}}\|_{L_2(\R^3\times (0,T))}$ as well as
boundedness of $\|{\omega_{\va}}/{\sqrt{r}}\|_{L_2(\R^3\times (0,T))},$ where $v_r$ is the radial component of velocity
and $\omega_{\va}$ is the angular component of vorticity, imply regularity of weak solutions.
\end{abstract}


\pagestyle{myheadings}
\thispagestyle{plain}
\markboth{J. Renc{\l}awowicz \& W. M. Zaj\c{a}czkowski}{NS regularity criteria}

\maketitle

\section{Introduction}

We consider the Cauchy problem to the three-dimensional axisymmetric Navier-Stokes equations:

\ben \bal \label{NS} %
&v_{t}+v\cdot\nabla v- \nu\De v + \nb p =0 \quad & (x,t) \in
 \R^3 \times{\R_+}, \\
&\Div v=0,\quad & \\
 &v\big|_{t=0}=v(0),\quad & \eal \een \noindent
where  $x= (x_1, x_2, x_3)$, $v$ is the velocity of the fluid motion with
$$v(x,t)=(v_1(x,t),v_2(x,t),v_3(x,t))\in\R^3,$$
$p=p(x,t)\in\R^1$ denotes the pressure,
and $v_0$ is given initial velocity field.

The first papers concerning regularity of axially symmetric solutions to the Navier-Stokes equations were independently proved by Ladyzhenskaya \cite{L} and Yudovich-Ukhovskij \cite{YU} in 1968. In these papers axisymmetric solutions without swirl were considered. In the period 1999-2002 arised many papers concerning sufficient conditions on regularity of axisymmetric solutions (\cite{CL}, \cite{NP1}, \cite{NP2}, \cite{LMNP}). Especially, conditions on one coordinate of velocity were considered. Recently there are many papers dealing with new sufficient conditions (see references of Lei and Zhang \cite{LZ}).

Our aim is to derive some criteria guaranteeing regularity of solutions to the
axisymmetric Navier-Stokes equations. By the regular solutions we mean smooth weak solutions
obtained by the standard increasing regularity technique for smooth initial data.There is a
lot of criteria for regularity of axisymmetric solutions (see \cite{CFZ}, \cite{NP1},
\cite{KP}, \cite{KPZ}, \cite{CT}, \cite{LZ}, \cite{Z} and the literature cited in these papers). In Section~2 we recall only
such criteria  that are useful for our analysis.

Since we are restricted to the axisymmetric solutions we introduce the cylindrical coordinates $(r,\va,z)$ by the relations
\benn
x_1=r\cos\varphi,\quad x_2=r\sin\varphi,\quad x_3=z,
\eenn
and corresponding unit vectors:
\benn
\bar e_r=(\cos\varphi,\sin\varphi,0),\quad
\bar e_\varphi=(-\sin\varphi,\cos\varphi,0),\quad \bar e_z=(0,0,1).
\eenn
Then the cylindrical components of velocity and vorticity ($\omega=\rot v$) for axisymmetric solutions (therefore, solutions independent of $\va$) are represented as
\benn
v = v_r (r,z,t) \bar e_r + v_\va(r,z,t) \bar e_\va +v_z(r,z,t) \bar e_z
\eenn
and
\benn
\omega = \omega_r (r,z,t) \bar e_r + \omega_\va(r,z,t) \bar e_\va +\omega_z(r,z,t) \bar e_z \\ =
- v_{\va,z} \bar e_r + (v_{r,z} - v_{z,r})\bar e_\va + (v_{\va,r}+ \frac{v_{\va}}{r})\bar e_z,
\eenn
where $v_r, v_\va, v_z$ are radial, angular and axial components of velocity.

The axisymmetric motion can be described by the three quantities: $v_{\va}, \omega_{\va}$ and the stream potential $\psi$ which are solutions to the following equations:
\ben \label{1.6-1.9} \bal
v_{\va,t} + v\cdot \nb v_{\va} - \nu (\De - \frac{1}{r^2}) v_{\va} + \frac{v_r}{r} v_{\va} = 0, \\ v_{\va}|_{t=0}= v_{\va}(0),\\
\omega_{\va,t} + v\cdot \nb \omega_{\va} - \nu (\De - \frac{1}{r^2}) \omega_{\va} - \frac{v_r}{r} \omega_{\va} - \frac{2}{r}v_{\va}v_{\va,z} = 0, \\ \omega_{\va}|_{t=0}= \omega_{\va}(0),\\
- (\De\psi - \frac{1}{r^2}\psi)= \omega_{\va},
\eal \een
where $v\cdot \nb= v_r\partial_r + v_z\partial_z, \De=\pa_r^2 + \pa_z^2 + \frac{1}{r}\pa_r $ and
\benn v_r = -\psi_{,z}, \ v_z = \frac{1}{r} (r\psi)_{,r}. \eenn
It is very convenient to introduce quantities $u_1, \omega_1, \psi_1$ by the relations
\benn
v_{\va} = r u_1, \ \omega_{\va} = r \omega_1,\  \psi = r\psi_1
\eenn
Then equations (\ref{1.6-1.9}) simplify to
\ben \label{1.11a} \bal
u_{1,t} + v\cdot \nb u_{1} - \nu \left(\De u_1 + \frac{2}{r} u_{1,r}\right)= 2 u_1 \psi_{1,z}, \\ u_{1}|_{t=0}= v_{1}(0), \eal\een
\ben \label{1.7} \bal
\omega_{1,t} + v\cdot \nb \omega_{1} - \nu \left(\De \omega_1 + \frac{2}{r} \omega_{1,r}\right) = 2u_1 u_{1,z}, \\ \omega_1|_{t=0}= \omega_1(0),\eal \een
\ben \label{1.13}
- \left(\De\psi_1 + \frac{2}{r}\psi_{1,r}\right)= \omega_1,\een
 and
\ben \label{1.14}
v_r = -r \psi_{1,z}, \quad v_z = \frac{1}{r} (r^2\psi_1)_{,r}.
 \een

We prove the following regularity criteria (see (\ref{1.10}), (\ref{1.11})) which are scaling invariant:
\begin{theorem} \label {th1}
1. Let $(v,p)$ be an axisymmetric solution to the Navier-Stokes equations (\ref{NS}) with the axisymmetric initial data and $\Div v(0) = 0$.

2. Assume that $\frac{v^2_{\va}(0)}{r}, \frac{\omega_{\va}(0)}{r}, \frac{\omega_r(0)}{r},
\om_z(0)$ belong to $L_2(R^3)$, and with $u=rv_{\va}, u(0)\in L_{\infty}(\R^3)\bigcap L_s(\R^3), s\ge 3.$

3. Assume that there exists constant $c_1$ such that
\ben \label{1.10}
\int_0^T dt \Or \frac{v_r^2}{r^3}dx \le c_1 < \infty
\een
then $v \in L_{\infty}(0,T;H^1(\R^3_{r_0})),$ where $\R^3_{r_0}= \{x\in \R^3, r<r_0 \}$ and $r_0>0$ is given.
Assume additionally that $v(0)\in B^{2-2/r}_{\si,r}(\R^3_{r_0})$ -Besov space. Then $v\in W^{2,1}_{\si,r}(\R^3_{r_0}\times(0,T)).$
\end{theorem}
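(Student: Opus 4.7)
The plan is to derive a priori $L_\infty(0,T;L_2(\R^3))$ bounds for the swirl $v_\varphi$, the weighted quantity $F:=v_\varphi^2/r$, and the angular vorticity $\omega_\varphi$ by three coupled weighted energy identities that all use hypothesis (\ref{1.10}) to absorb the $v_r/r$-type forcing on their right-hand sides. Once $\omega_\varphi\in L_\infty(L_2)$ is in hand, the companion estimates for $\omega_r/r$ and $\omega_z$ obtained by the analogous scheme combine, via the stream-function representation (\ref{1.13})--(\ref{1.14}), to give $v\in L_\infty(0,T;H^1(\R^3_{r_0}))$; the restriction to the cylinder $r<r_0$ is natural because on that region each weighted $L_2(w\,dx)$ norm with $w\sim r^{-k}$ dominates the corresponding unweighted one. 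The final $W^{2,1}_{\sigma,r}$ assertion is then a routine application of parabolic maximal regularity for the perturbed Stokes system with Besov initial data. As a preliminary, the maximum principle applied to $u=rv_\varphi$---whose equation $u_t+v\cdot\nabla u-\nu(\Delta u-(2/r)u_{,r})=0$ has vanishing trace at $r=0$---yields $\|u(t)\|_{L_\infty}\le\|u(0)\|_{L_\infty}$ and $\|u(t)\|_{L_s}\le\|u(0)\|_{L_s}$ for $s\ge 3$.

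Testing the $v_\varphi$-equation in (\ref{1.6-1.9}) by $v_\varphi$ gives
\[
\tfrac12\partial_t\|v_\varphi\|_{L_2}^2+\nu\bigl(\|\nabla v_\varphi\|_{L_2}^2+\|v_\varphi/r\|_{L_2}^2\bigr)=-\Or\tfrac{v_r}{r}v_\varphi^2\,dx.
\]
The right-hand side is bounded, via Cauchy--Schwarz and the auxiliary inequality $\Or v_\varphi^4 r\,dx\le C\|v_\varphi\|_{L_2}^2\bigl(\|\nabla v_\varphi\|_{L_2}^2+\|v_\varphi/r\|_{L_2}^2\bigr)$---obtained from the 2D Ladyzhenskaya inequality in the half-plane $(r,z)\in\R_+\times\R$ applied to $g:=v_\varphi\sqrt r$, which vanishes at $r=0$ since $v_\varphi=O(r)$ there by the odd parity of the swirl for smooth axisymmetric solutions---by $C\|v_r/r^{3/2}\|_{L_2}\|v_\varphi\|_{L_2}(\|\nabla v_\varphi\|_{L_2}^2+\|v_\varphi/r\|_{L_2}^2)^{1/2}$. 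Young's inequality absorbs the gradient and Gronwall together with (\ref{1.10}) gives $v_\varphi\in L_\infty(0,T;L_2)\cap L_2(0,T;H^1)$ with $v_\varphi/r\in L_2(0,T;L_2)$.

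The crucial estimate is the $L_2$-bound on $F=v_\varphi^2/r$. Testing the $v_\varphi$-equation by $v_\varphi^3/r^2$ and integrating by parts (the would-be-singular $v_\varphi^4/r^4$ contributions from the convective and diffusive terms cancel against the curvature term $-v_\varphi/r^2$) produces
\[
\tfrac14\partial_t\|F\|_{L_2}^2+3\nu\Or\tfrac{v_\varphi^2|\nabla v_\varphi|^2}{r^2}\,dx=-\tfrac32\Or\tfrac{v_rv_\varphi^4}{r^3}\,dx.
\]
Using the algebraic identity $v_\varphi^8/r^3=F^4\,r$ together with a second application of the 2D Ladyzhenskaya inequality, this time to $g':=v_\varphi^2/\sqrt r$ (which vanishes at $r=0$ because $v_\varphi^2=O(r^2)$), one obtains $\Or v_\varphi^8/r^3\,dx\le C\|F\|_{L_2}^2\Or v_\varphi^2|\nabla v_\varphi|^2/r^2\,dx$; Cauchy--Schwarz against $v_r/r^{3/2}$, Young's inequality, and Gronwall with the initial datum $v_\varphi^2(0)/r\in L_2$ then give $F\in L_\infty(0,T;L_2)$. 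With this uniform bound, the vortex-stretching term in the $\omega_\varphi$-energy identity (obtained by testing the $\omega_\varphi$-equation by $\omega_\varphi$) becomes, after integration by parts in $z$, $-\Or F\,\omega_{\varphi,z}\,dx$, which is controlled by $\|F\|_{L_\infty L_2}\|\omega_{\varphi,z}\|_{L_2}$ and absorbed by Young; the remaining $(v_r/r)\omega_\varphi^2$-term is handled exactly as for $v_\varphi$ (now using $\omega_\varphi|_{r=0}=0$ by the odd parity of the angular vorticity). Gronwall then gives $\omega_\varphi\in L_\infty(0,T;L_2)$, and analogous weighted energy estimates for $\omega_r/r=-u_{1,z}$ and $\omega_z$ via the $u_1$-equation (\ref{1.11a}) provide the other vorticity components. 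The main obstacle is this $F$-estimate: the cancellation of the formally divergent $v_\varphi^4/r^4$ integrals in the diffusion is what makes the convective term $\int v_r v_\varphi^4/r^3\,dx$ absorbable by the viscous dissipation through (\ref{1.10}).
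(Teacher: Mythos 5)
Your core estimate for $F=v_\va^2/r$ is correct and is a genuinely different (and more direct) route than the paper's: the identity $\tfrac14\partial_t\|F\|_{L_2}^2+3\nu\Or v_\va^2|\nb v_\va|^2r^{-2}dx=-\tfrac32\Or v_rv_\va^4r^{-3}dx$ does hold (the $v_\va^4/r^4$ terms cancel as you say, and indeed $4\Or v_\va^2|\nb v_\va|^2r^{-2}dx=\|\tilde\nb F\|_{L_2}^2+\|v_\va/r\|_{L_4}^4$, so your dissipation controls exactly what the weighted Ladyzhenskaya step on $g'=v_\va^2/\sqrt r$ requires), and Cauchy--Schwarz against $v_r/r^{3/2}$ lets hypothesis (\ref{1.10}) close the Gronwall argument. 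The paper instead reaches $v_\va^2/r\in L_\infty L_2$ only at the end of a chain: Lemma~3.1 (the $u_\al=u/r^\al$ estimate) gives $r^dv_\va\in L_{p,q}$, this feeds the imported Chen--Fang--Zhang estimate (\ref{fi-ga}) for $(\om_r/r,\om_\va/r)$, which via (\ref{2.11a}) feeds Lemma~\ref{le-2.6}; the conclusion is then drawn from the Kreml--Pokorn\'y one-component criterion (Lemma~\ref{le-KP}) rather than from a vorticity bound. So your scheme, if completed, would be more self-contained.

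The genuine gap is the sentence ``analogous weighted energy estimates for $\om_r/r=-u_{1,z}$ and $\om_z$ \dots provide the other vorticity components.'' This is where the actual difficulty of the axisymmetric problem sits and it is not analogous to anything you have done: the equation for $\Phi=\om_r/r$ carries the vortex-stretching term $(\om_r\pa_r+\om_z\pa_z)\frac{v_r}{r}$, which is not of the absorbable form $\frac{v_r}{r}\times(\hbox{quantity})^2$ and cannot be handled by the Cauchy--Schwarz/Ladyzhenskaya/Gronwall trick that works for $v_\va$, $F$ and $\om_\va$. Equivalently, $\om_r,\om_z\in L_\infty L_2$ amounts to $v_\va\in L_\infty(0,T;H^1)$, and testing the $v_\va$-equation by $-\De v_\va$ does not close with only $\nb v\in L_2(0,T;L_2)$ available. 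The paper's substitute for this step is precisely the estimate (\ref{fi-ga}), whose hypothesis $r^dv_\va\in L_{p,q}$ with $3/p+2/q\le 1-d$ it verifies through Lemma~3.1. Your argument can be repaired without Lemma~3.1: interpolating your bound $\|v_\va^2/r\|_{L_\infty(0,T;L_2)}<\infty$ against the maximum principle $\|rv_\va\|_{L_\infty}\le\|u(0)\|_{L_\infty}$ gives $\Or|r^dv_\va|^pdx\le\|rv_\va\|_{L_\infty}^{p-4}\Or v_\va^4r^{-2}dx$ for $p=6/(1-d)$, hence $r^dv_\va\in L_\infty(0,T;L_{6/(1-d)})$ with $3/p+2/q=(1-d)/2<1-d$, which places you inside the hypothesis of (\ref{fi-ga}); but as written the claim about $\om_r$ and $\om_z$ is unsupported, and without it neither $v\in L_\infty(0,T;H^1(\R^3_{r_0}))$ nor the final $W^{2,1}_{\si,r}$ statement follows.
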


\begin{remark}
For $\si> 3, r=2$ we have that $v\in L_{\infty}(\R^3_{r_0} \times(0,T))$
so in view of \cite{CKN} there is no singular points. In $\bar{\R}^3_{r_0}= \{x\in \R^3, r> r_0 \}$ the axisymmetric problem (\ref{NS}) is two-dimensional so local regularity of $v$ is evident.
\end{remark}

\begin{theorem} \label{th2}
Let the assumptions~1,2 of Theorem~1 hold. If
\ben \label{1.11}
\int_0^T dt \Or \frac{\omega_{\va}^2}{r}dx \le c_2 < \infty
\een
then there exists a constant $c_3$ such that
\ben
\int_0^T dt \Or \frac{v_r^2}{r^3}dx \le c_3\int_0^T dt \Or \frac{\omega_{\va}^2}{r} dx\le c_3c_2
\een
\end{theorem}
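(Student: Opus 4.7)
My plan is to reduce the statement to a pointwise-in-$t$ elliptic inequality relating $v_r$ and $\omega_\va$ via the stream potential $\psi_1$. Using (\ref{1.14}) one has $v_r=-r\psi_{1,z}$, and by definition $\omega_\va=r\omega_1$; writing the axisymmetric volume element $dx=2\pi r\,dz\,dr$ one obtains
\benn
\int_{\R^3}\frac{v_r^2}{r^3}\,dx=2\pi\int_0^\infty\!\!\int_\R \psi_{1,z}^2\,dz\,dr,\qquad \int_{\R^3}\frac{\omega_\va^2}{r}\,dx=2\pi\int_0^\infty\!\!\int_\R r^2\omega_1^2\,dz\,dr,
\eenn
so it suffices to prove
\ben\label{planstar}
\int_0^\infty\!\!\int_\R \psi_{1,z}^2\,dz\,dr\le c\int_0^\infty\!\!\int_\R r^2\omega_1^2\,dz\,dr
\een
with an absolute constant $c$. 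By (\ref{1.13}) the pair $(\psi_1,\omega_1)$ satisfies $-\psi_{1,rr}-\frac{3}{r}\psi_{1,r}-\psi_{1,zz}=\omega_1$, which is exactly the Laplace equation on $\R^5=\R^4_{y'}\times\R_z$ for the axisymmetric extension $\tilde\psi_1(y',z):=\psi_1(|y'|,z)$, since the radial part of the $\R^4$ Laplacian is $\pa_r^2+\frac{3}{r}\pa_r$.

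To prove (\ref{planstar}) I Fourier--transform in $z$. With $\hat\psi_1(r,\xi),\hat\omega_1(r,\xi)$ the partial transforms, the equation becomes $-\pa_r^2\hat\psi_1-\frac{3}{r}\pa_r\hat\psi_1+\xi^2\hat\psi_1=\hat\omega_1$. I then multiply by $\overline{\hat\psi_1}\,r^2$ and integrate in $r\in(0,\infty)$; the point of this weight is that the $\frac{3}{r}$--term, upon integration by parts, produces a crucial unweighted $L^2$ contribution, yielding
\benn
\int_0^\infty r^2|\pa_r\hat\psi_1|^2\,dr+\frac{1}{2}\int_0^\infty|\hat\psi_1|^2\,dr+\xi^2\int_0^\infty r^2|\hat\psi_1|^2\,dr=\mathrm{Re}\int_0^\infty \hat\omega_1\overline{\hat\psi_1}\,r^2\,dr.
\eenn
Cauchy--Schwarz on the right gives $\mathrm{RHS}\le(\int r^2|\hat\omega_1|^2\,dr)^{1/2}(\int r^2|\hat\psi_1|^2\,dr)^{1/2}$. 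Controlling first the $\xi^2$--term yields $\int r^2|\hat\psi_1|^2\,dr\le\xi^{-4}\int r^2|\hat\omega_1|^2\,dr$, and feeding this back into the $\frac{1}{2}\int|\hat\psi_1|^2$--term gives $\xi^2\int|\hat\psi_1|^2\,dr\le 2\int r^2|\hat\omega_1|^2\,dr$. Integrating over $\xi\in\R$ and applying Plancherel (via $\widehat{\psi_{1,z}}=i\xi\hat\psi_1$) establishes (\ref{planstar}) with $c=2$. Integrating in $t\in(0,T)$ and invoking (\ref{1.11}) then delivers the theorem with $c_3=2$.

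The main technical point I expect to need careful verification is the vanishing of the $r$--boundary contributions in the integration by parts at both ends. At $r=0$, axial smoothness of the axisymmetric field $v$ forces $\psi=r\psi_1$ to be odd in $r$ and $\psi_1$ to be even, so $\pa_r\psi_1(0,z)=0$ (and hence $\pa_r\hat\psi_1(0,\xi)=0$), which makes the boundary contribution at the axis vanish; at $r=\infty$ one uses the standard Biot--Savart decay of $\psi_1$ inherited from the regularity of $\omega_\va$ assumed in part~2 of Theorem~\ref{th1}. Both are routine but need to be spelled out to make the identity above rigorous in the generality required.
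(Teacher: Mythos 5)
Your proof is correct and yields the theorem with the explicit constant $c_3=2$, but it takes a technically different route from the paper's. The paper (Lemma~\ref{le-3.3}) differentiates the stream-function equation in (\ref{1.6-1.9}) with respect to $z$ to get $-\De\psi_{,z}+\frac{1}{r^2}\psi_{,z}=\om_{\va,z}$ for $u=\psi_{,z}=-v_r$, and tests this equation in physical space with $\chi=\psi_{,z}/r^{\al}$ on $\R^3_{\eps}$; after one integration by parts in $r$ the coefficient of $\int\psi_{,z}^2r^{-2-\al}dx$ comes out as $1-\al^2/2$, which is positive at $\al=1$, and a single H\"older--Young step on $\int\om_{\va}r^{-\al}\psi_{,zz}dx$ gives $\Or\frac{v_r^2}{r^3}dx\le c\Or\frac{\om_{\va}^2}{r}dx$ directly, after which one lets $\eps\to0$ and integrates in time. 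You instead pass to the reduced variables $\psi_1=\psi/r$, $\om_1=\om_{\va}/r$, exploit the five-dimensional Laplacian structure of (\ref{1.13}), Fourier-transform in $z$, and use the multiplier $\overline{\hat\psi_1}\,r^2$; the coercive term $\frac{1}{2}\int|\hat\psi_1|^2dr$ you produce plays exactly the role of the paper's $(1-\al^2/2)$-term, and your two-step bootstrap through the $\xi^2$-weight replaces the paper's single Young inequality. I checked your identity and the subsequent estimates; they are right, and the reduction of $\Or v_r^2r^{-3}dx$ and $\Or\om_{\va}^2r^{-1}dx$ to the $(r,z)$-integrals is also correct. Both arguments are weighted coercivity estimates for the same elliptic equation, and both are at the same level of rigor concerning the boundary terms at the axis and at infinity: the paper works on $r>\eps$, where the axis boundary term has a favorable sign, and passes to the limit, while you appeal to evenness of $\psi_1$ in $r$ and Biot--Savart decay; either device would also repair the other proof. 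Your version buys an explicit constant and makes the mechanism (the $\R^5$ structure of $\psi_1$) transparent; the paper's version avoids the Fourier transform and Plancherel and additionally sets up the variational formulation in $H^1_0(\R^3)$ with existence and uniqueness of the solution being estimated, which is closer to a genuine weak-solution argument.
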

\begin{remark}
For $r < \infty$, the assumption is fulfilled by Lemma~\ref{le-2.6} and Lemma~\ref{le-2.8} because for $r\le c_0$ holds $$\Or \frac{\omega_{\va}^2}{r} dx \le c_0 \Or \frac{\omega_{\va}^2}{r^2} dx.$$
\end{remark}

\section{Notation and auxiliary results}
\setcounter{equation}{0}
\setcounter{lemma}{0}

By $L_p(\R^N), p\in [1,\infty],$ we denote the Lebesgue space of integrable functions.
By $L_{p,q}(\R^3 \times (0,T))$ we denote the anisotropic Lebesgue space with the following finite norm
\benn
\|u\|_{L_{p,q}(\R^3 \times (0,T))}= \left( \int_0^T \int_{\R^3}(|u(x,t)|^p dx)^{q/p}dt \right)^{1/q},
\eenn
where $p,q \in [1,\infty].$

We define Sobolev spaces $W_p^{2,1}(\R^3 \times (0,T))$ and $W^{2-2/p}_p(\R^3 \times (0,T))$ by
\benn
\|u\|_{W_p^{2,1}(\R^3 \times (0,T))}  =  \left( \int_0^T \int_{\R^3}(|\nb_x^2 u|^p + |u_t|^p +|u|^p)dx dt \right)^{1/p} <  \infty, & &  \\
\|u\|_{W_p^{2-2/p}(\R^3)} = & & \\ \left(\sum_{i\le [2-2/p]} \int_{\R^3}|\nb^i u|^p dx +
\int_{\R^3}\int_{\R^3} \frac{|\nb_x^{[2-2/p]}u(x) - \nb_y^{[2-2/p]}u(y)|^p}{|x-y|^{3+p(2-2/p
- [2-2/p])}}dx dy\right)^{1/p} < \infty, & &
\eenn where $[l]$ is the integer part of $l$.

By $H^s(\R^3), s\in \N_0 = \N \cup \{0\}$ we denote the Sobolev space $W^s_2(\R^3).$

\begin{lemma} \label{le-2.1}
There exists a weak solution to problem (\ref{NS}) such that
$v \in L_{\infty}(0,T;L_2(\R^3))\cap L_2(0,T;H^1(\R^3))$
and the following estimate holds
\ben \label{2.1}
\int_{\R^3} |v(t)|^2 dx + \nu \int_0^{t} dt' \int_{\R^3}|\nb v|^2 dx\le c \int_{\R^3}|v(0)|^2 dx.
\een
In the case of axisymmetric solutions the energy inequality (\ref{2.1}) takes the form
\ben \label{2.2} \bal
\qquad \int_{\R^3} |v(t)|^2 dx + \nu \int_0^{t} dt' \int_{\R^3}\left(|\nb v|^2+\left|\frac{v_r}{r}\right|^2 +\left|\frac{v_{\va}}{r}\right|^2 \right)dx \\ \le c \int_{\R^3}|v(0)|^2 dx.
\eal \een
\end{lemma}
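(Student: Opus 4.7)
The first part together with the energy inequality \eqref{2.1} is the classical Leray--Hopf theorem, and I would obtain it by the standard Galerkin (or Friedrichs-mollification) scheme. Construct a sequence $v^n$ of approximate solutions in a finite-dimensional divergence-free subspace, test against $v^n$ to get the identity
\benn
\tfrac12 \pa_t \|v^n\|_{L_2}^2 + \nu \|\nb v^n\|_{L_2}^2 = 0,
\eenn
then integrate in time to obtain uniform bounds in $L_\infty(0,T;L_2)\cap L_2(0,T;H^1)$. Aubin--Lions compactness plus weak/strong subsequential convergence produces a weak Leray--Hopf solution, and weak lower semicontinuity of the norms yields \eqref{2.1}.

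For the axisymmetric refinement \eqref{2.2} the plan is to exploit the extra zeroth-order terms produced by the vector Laplacian in cylindrical coordinates. Writing \eqref{NS} componentwise in the axisymmetric setting one obtains
\benn
v_{r,t} + v\cdot\nb v_r - \tfrac{v_\va^2}{r} - \nu\bigl(\De v_r - \tfrac{v_r}{r^2}\bigr) + p_{,r} &=& 0, \\
v_{\va,t} + v\cdot\nb v_\va + \tfrac{v_r v_\va}{r} - \nu\bigl(\De v_\va - \tfrac{v_\va}{r^2}\bigr) &=& 0, \\
v_{z,t} + v\cdot\nb v_z - \nu\De v_z + p_{,z} &=& 0,
\eenn
which I would multiply respectively by $v_r$, $v_\va$, $v_z$ and integrate over $\R^3$. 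The two curvature contributions cancel, $v_r(-v_\va^2/r) + v_\va(v_r v_\va/r) = 0$; the transport term $\int v\cdot\nb(|v|^2/2)\,dx$ and the pressure term both vanish because $\Div v = 0$; and integration by parts of the scalar Laplacians, combined with the positive zeroth-order cylindrical corrections, produces precisely the three nonnegative contributions on the left of \eqref{2.2}. Carrying out the same calculation on axisymmetric Galerkin approximations and passing to the weak limit gives \eqref{2.2}.

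The one technical point to control is that the approximation scheme must preserve both axisymmetry and the divergence-free constraint, so that the componentwise cylindrical computation is legitimate at the discrete level. This can be arranged by choosing, for instance, a basis of axisymmetric divergence-free eigenfunctions of the Stokes operator on $\R^3$ (or, after a suitable truncation, on a large ball); once the axisymmetric structure is built into the basis, the cancellation of the curvature terms and the extraction of the extra $(v_r/r)^2$ and $(v_\va/r)^2$ contributions go through on each $v^n$, and the limit passage is standard.
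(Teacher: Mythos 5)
Your proposal is correct and follows essentially the same route as the paper: the componentwise cylindrical form of the equations, multiplication by $v_r$, $v_\va$, $v_z$, the cancellation of the curvature terms $-v_\va^2 v_r/r + v_r v_\va^2/r=0$, and the extraction of the extra $\frac{v_r^2}{r^2}$, $\frac{v_\va^2}{r^2}$ contributions from the cylindrical Laplacian. Your extra care about an axisymmetric divergence-free Galerkin basis only makes explicit what the paper leaves to the standard construction (it cites Temam and works with a priori estimates on $C_0^\infty(\R^3\setminus I)$).
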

\begin{proof}
Equations $(1.1)_{1,2}$ for the axially symmetric
solutions assume the form
\ben \label{2.3}
v_{r,t}+v\cdot\nabla v_r-{v_\varphi^2\over r}-\nu\Delta v_r+\nu
\frac{v_r}{r^2}=-p_{,r},
\een
\ben \label{2.4}
v_{\varphi,t}+v\cdot\nabla v_\varphi+{v_r\over r}v_\varphi-\nu\Delta v_\varphi+
\nu \frac{v_\varphi}{r^2}=0
\een
\ben \label{2.5}
v_{z,t}+v\cdot\nabla v_z-\nu\Delta v_z=-p_{,z},
\een
\ben \label{2.6}
v_{r,r}+v_{z,z}=-{v_r\over r},
\een
where $v\cdot\nabla=v_r\partial_r+v_z\partial_z$,
$\Delta u=\frac{1}{r}(ru_{,r})_{,r}+u_{,zz}$.

\noindent

Let $I= \{ (\va, r, z): r=0\}$ denote the axis of symmetry. Define the space $X$ as the closure of $C^{\infty}_0 (\R^3\setminus I)$ in the $X$ norm. Then, we are looking for a priori estimate for functions $v\in X$.

Multiplying (\ref{2.4}) by $v_\varphi$ and integrating over $\R^3$ yields
$$
\frac{1}{2}\frac{d}{dt}\Or v_\varphi^2dx+\Or
\frac{v_r}{r}v_\varphi^2dx+\nu\Or(v_{\varphi,r}^2+
v_{\varphi,z}^2)dx
+\nu\Or{v_\varphi^2\over r^2}dx=0.
$$
Multiplying (\ref{2.3}) by $v_r$, integrating over $\R^3$ implies
$$
\frac{1}{2}\frac{d}{dt}\Or v_r^2dx-\Or
\frac{v_\varphi^2}{r}v_rdx+\nu\Or(v_{r,r}^2+v_{r,z}^2)dx+
\nu\Or \frac{v_r^2}{r^2}dx =-\Or p_{,r}v_rdx.
$$
Multiplying (\ref{2.5}) by $v_z$ and integrating over $\R^3$ we obtain
$$
\frac{1}{2}\frac{d}{dt}\Or v_z^2dx+\nu\Or(v_{z,r}^2+v_{z,z}^2)
dx=-\Or p_{,z}v_zdx.
$$
Adding the above equations and using (\ref{2.6})
we obtain
\ben \label{2.22}
\frac{1}{2}\frac{d}{dt}\Or(v_r^2+v_\varphi^2+v_z^2)dx+
\\ \nu \Or(v_{r,r}^2+v_{r,z}^2+v_{\varphi,r}^2+v_{\varphi,z}^2+v_{z,r}^2+
v_{z,z}^2)dx
+\nu\Or\bigg(\frac{v_r^2}{r^2}+\frac{v_\varphi^2}{r^2}\bigg)dx
=0.\nn
 \een
Integrating (\ref{2.22}) with respect to time from $0$ to $t$,
$t\le T,$  yields
\benn
\nu \|v(t)\|_{L_2(\R^3)}^2+\nu\intop_{0}^t\Or(|v_{,r}|^2+|v_{,z}|^2)
dxdt'+\nu\intop_{0}^t\Or\bigg(\frac{v_r^2}{r^2}+
\frac{v_z^2}{r^2}\bigg)dxdt'\\
\le \frac{1}{2} \|v(0)\|_{L_2(\R^3)}^2.
\eenn
 This ends the proof.

\end{proof}

To derive energy estimates in the proof of Lemma~\ref{le-2.1} we use the ideas from the proof of Theorem~3.1 from \cite{T}, Ch.3.
The notion of a suitable weak solution was introduced by L. Caffarelli, R. Kohn and L. Nirenberg in the famous paper \cite{CKN}. Our aim is to show that either (\ref{1.10}) or (\ref{1.11}) implies that a suitable weak solution to problem (\ref{NS}) does not contain singular points.This means that $(v,p)$ is a regular solution to (\ref{NS}). In other words it means that if $v(0)\in W_p^{2-2/p}(\R^3)$ then $v\in W^{2,1}_p(\R^3 \times R_+)$ for any $p\in (1,\infty)$. Hence for $p>\frac{5}{2}$ we have that $v\in L_{\infty}(\R^3 \times R_+)$ so it is also bounded locally. Therefore $v$ has no singular points (see \cite{CKN}). To show this we use results of J. Neustupa, M. Pokorny and O. Kreml (see \cite{NP1}, \cite{NP2}, \cite{KP}). To clarify presentation we recall the results.

From \cite{NP2} it follows that
\begin{lemma} \cite{NP2} \label{le-np2}
Let $v$ be an axisymmetric suitable weak solution to problem (\ref{NS}). Suppose that there exists a subdomain $D \subset \R^3\times \R_+$ such that the angular component $v_{\va}$ of $v$ belongs to $L_{s,r}(D)$ where

1. either $s\in [6,\infty], r\in[20/7, \infty]$ and $2/r+3/s\le 7/10.$

2. or $s\in [24/5,6], r\in [10,\infty]$ and $2/r+3/s\le 1-9/(5s).$

Then $v$ has no singular points in $D.$

\end{lemma}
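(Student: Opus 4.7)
The overall plan is to invoke the Caffarelli--Kohn--Nirenberg partial regularity theory: for a suitable weak solution it is enough to upgrade $v$, on every parabolic subcylinder $Q\subset D$, to a Serrin class $L_{s',r'}(Q)$ with $2/r'+3/s'\le 1$, after which $v$ becomes smooth on $Q$ and no singular point can lie inside. The subcritical gap in the hypothesis (the factor $7/10$, respectively the correction $9/(5s)$) is precisely the room that an interpolation--bootstrap argument needs to produce such $(s',r')$.

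The starting observation is that the swirl $\Ga=rv_{\va}$ satisfies the source-free convection--diffusion equation
\[
\Ga_{t}+v\cdot\nb\Ga-\nu\De\Ga+\frac{2\nu}{r}\Ga_{,r}=0,
\]
so the parabolic maximum principle and $L^{q}$ energy estimates propagate the integrability of $\Ga$ inherited from the hypothesis $v_{\va}\in L_{s,r}(D)$, uniformly on subdomains of $D$. This is the quantitative input that will control the driving term $\frac{2}{r}v_{\va}v_{\va,z}=2u_{1}u_{1,z}$ appearing in the angular vorticity equation (\ref{1.7}).

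Next I would run a weighted $L^{q}$ energy estimate for $\om_{1}=\om_{\va}/r$: test (\ref{1.7}) by $|\om_{1}|^{q-2}\om_{1}$ multiplied by a space--time cutoff supported in $D$, and integrate. The convective term is treated via (\ref{1.14}) together with the axisymmetric Biot--Savart relation (\ref{1.13}), which represents $v_{r}$ and $v_{z}$ in terms of $\om_{1}$ with a gain of one derivative. The right-hand side $2u_{1}u_{1,z}$ is estimated by H\"older and Sobolev inequalities calibrated to the hypothesis; the regime $s\in[6,\infty]$ admits a clean Sobolev embedding, while the regime $s\in[24/5,6]$ requires the sharper interpolation encoded by the $9/(5s)$ correction in order to absorb a residual radial factor $1/r$ into the parabolic dissipation $\nu\int|\nb\om_{1}|^{2}$.

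The resulting higher integrability of $\om_{\va}$, fed back through the axisymmetric Biot--Savart representation, then yields $v\in L_{s',r'}(D')$ for exponents in the Serrin range on any $D'\subset D$, and Serrin's criterion together with Lemma~\ref{le-2.1} closes the argument. The main obstacle will be closure in the second regime: the range of admissible $(s,r)$ is narrow, the interpolation constants depend on $1/s$ in a delicate way, and one has to arrange the absorption of the source term into the parabolic part without losing the subcritical margin --- this balancing, which is where the precise exponents in the statement come from, is the technical heart of the proof.
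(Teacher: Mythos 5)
First, note that the paper itself offers no proof of this lemma: it is quoted verbatim from \cite{NP2} (``To clarify presentation we recall the results''), so there is no in-paper argument to compare against. Judged on its own merits, your sketch points in roughly the right direction --- the actual proof in \cite{NP2} does work through the angular vorticity, with the hypothesis on $v_{\va}$ entering via the source term $\frac{2}{r}v_{\va}v_{\va,z}=\frac{1}{r}\partial_z(v_{\va}^2)$, followed by recovery of $v_r,v_z$ from $\om_{\va}$ through the stream function and an application of an interior Serrin/$\eps$-regularity criterion. But as a proof your proposal has genuine gaps.

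The most serious ones are these. (i) The appeal to the maximum principle for the swirl $\Ga=rv_{\va}$ is not available: the hypothesis is purely local ($v_{\va}\in L_{s,r}$ on a subdomain $D$ only), the lemma assumes nothing about $\Ga$ initially or on the parabolic boundary of $D$, and for a suitable weak solution the swirl maximum principle itself requires justification. Worse, if one could import $\Ga\in L_\infty$ the delicate exponent ranges would be beside the point; the source term in (\ref{1.7}) must be controlled using only the assumed $L_{s,r}$ bound on $v_{\va}$, which is precisely where the thresholds come from. (ii) Testing the vorticity equation with $|\om_1|^{q-2}\om_1$ presupposes regularity that a suitable weak solution does not a priori possess; \cite{NP2} circumvent this by working on the maximal smooth interval before a hypothetical singular time and deriving estimates uniform up to that time, combined with the observation (from \cite{CKN} plus axial symmetry) that singular points can only lie on the axis, since an off-axis singularity would generate a whole circle of singular points of positive one-dimensional parabolic Hausdorff measure. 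Your sketch omits both reductions. (iii) The thresholds $7/10$ and $1-9/(5s)$, which constitute the entire content of the lemma, are never derived --- you explicitly defer ``the technical heart of the proof.'' What you have is a plausible roadmap consistent with the strategy of \cite{NP2}, not a proof.
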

\begin{lemma}[KP] \label{le-KP}
Let $v$ be an axisymmetric suitable weak solution to problem (\ref{NS}). Suppose that there exists a subdomain $D \subset \R^3\times \R_+$ such that the angular component $v_{\va}$ of $v$ belongs to $L_{s,r}(D)$ where
$$ s\in \left(\frac{24}{7}, 4 \right], r\in \left(\frac{8s}{7s-24},\infty\right], \frac{3}{s}+\frac{2}{r} < \frac{7}{4} - \frac{3}{s}.$$
Then $v$ has no singular points in $D.$
\end{lemma}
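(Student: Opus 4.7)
The plan is to establish an energy inequality for the desingularised angular vorticity $\omega_1=\omega_\va/r$ using its evolution equation (\ref{1.7}), and to control the vortex-stretching source $2u_1 u_{1,z}$ on its right-hand side by the hypothesised $L_{s,r}(D)$ bound on $v_\va = r u_1$. Once $\omega_1$ is placed in a sufficiently strong Lebesgue class, a standard bootstrap through (\ref{1.11a}), (\ref{1.13})-(\ref{1.14}) combined with a cited axisymmetric regularity statement such as Lemma~\ref{le-np2} rules out singular points of $v$ in $D$.

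Concretely, I would test (\ref{1.7}) against $|\omega_1|^{q-2}\omega_1$ for a suitably chosen exponent $q\ge 2$. The convective term vanishes by the axisymmetric divergence-free identity $v_{r,r}+v_{z,z}+v_r/r=0$. The diffusion $\De+(2/r)\pa_r$ is the natural five-dimensional radial Laplacian in cylindrical coordinates: integration by parts in $\R^3$ with the axisymmetric volume element produces a standard dissipation $\nu c_q \int |\omega_1|^{q-2}|\nb\omega_1|^2\,dx$, together with a further nonnegative axis boundary contribution, and no pathological terms, provided one works in the class $X$ introduced in the proof of Lemma~\ref{le-2.1}. Rewriting $2u_1 u_{1,z}=(u_1^2)_{,z}$ and integrating by parts in $z$ converts the source into $-(q-1)\int u_1^2|\omega_1|^{q-2}\omega_{1,z}\,dx$; Cauchy-Schwarz then splits this into a piece absorbable by the dissipation and a residual of the form $C\int u_1^4|\omega_1|^{q-2}\,dx = C\int r^{-4}v_\va^4|\omega_1|^{q-2}\,dx$.

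The crucial step is bounding this residual by $\|v_\va\|_{L_{s,r}(D)}$. A combination of H\"older's inequality in space and time, the Sobolev embedding $H^1(\R^3)\hookrightarrow L_6(\R^3)$ applied to $|\omega_1|^{q/2}$, and interpolation against the running $L_q$ norm of $\omega_1$, produces a differential inequality whose Gronwall closure imposes the scaling balance $\frac{6}{s}+\frac{2}{r}<\frac{7}{4}$ --- exactly the stated hypothesis $\frac{3}{s}+\frac{2}{r}<\frac{7}{4}-\frac{3}{s}$. Requiring admissibility of the interpolation and H\"older exponents pins $s$ to $(24/7,4]$ together with $r>8s/(7s-24)$; the forbidden endpoint $s=24/7$ corresponds to the degenerate case where the scaling inequality collapses. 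One thereby obtains $\omega_1\in L_\infty(0,T;L_q(\R^3))\cap L_2(0,T;H^1(\R^3))$ for the chosen $q$.

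The main obstacle is the treatment of the singular weights near the symmetry axis $r=0$: both $u_1=v_\va/r$ and the prefactor $r^{-4}$ are formally singular, and the integration-by-parts identities used above for the operator $\De+(2/r)\pa_r$ rely on sufficient decay of $\omega_1$ as $r\to 0$, which has to be justified from the suitable-weak-solution framework of Caffarelli--Kohn--Nirenberg. Once the $\omega_1$ bound is secured, the Biot-Savart relations (\ref{1.13})-(\ref{1.14}) and the transport-diffusion equation (\ref{1.11a}) for $u_1$ yield an improved integrability statement for $v_\va$ falling inside the Neustupa--Pokorn\'y range covered by Lemma~\ref{le-np2}, concluding the exclusion of singular points of $v$ in $D$.
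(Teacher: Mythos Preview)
The paper does not prove this lemma; it is quoted verbatim from Kreml--Pokorn\'y \cite{KP} as one of the external regularity criteria invoked in Section~3. There is therefore no in-paper argument to compare against, and your sketch should be read as an attempt at an independent proof of the cited result.

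Your outline has the right architecture and recovers the correct scaling constraint $6/s+2/r<7/4$, but the specific choice of unknown creates a gap you flag yet do not close. Testing (\ref{1.7}) for $\om_1=\om_\va/r$ against $|\om_1|^{q-2}\om_1$ and integrating the source by parts leaves, after Cauchy--Schwarz, the residual
\[
\int u_1^4\,|\om_1|^{q-2}\,dx=\int r^{-4}\,v_\va^4\,|\om_1|^{q-2}\,dx.
\]
The hypothesis controls $v_\va$ in $L_{s,r}(D)$, not $v_\va/r$; the extra weight $r^{-4}$ cannot be absorbed by Sobolev or interpolation on $|\om_1|^{q/2}$, and there is no Hardy-type inequality in this setting that supplies four inverse powers of $r$. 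Your own remark that this is ``the main obstacle'' is accurate, but appealing to the suitable-weak-solution framework does not remove it: the issue is algebraic, not one of qualitative regularity near the axis.

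In the original \cite{KP} argument the energy estimate is carried out on the pair $(\om_r/r,\om_z)$ rather than on $\om_1$. The evolution equations for these quantities couple to $v_\va$ through vortex-stretching terms that carry only a single factor of $1/r$, which is exactly why an $L_{s,r}$ assumption on $v_\va$ (as opposed to $v_\va/r$) suffices and why the threshold exponent $7/4$ appears. Switching from $\om_1$ to the $(\om_r/r,\om_z)$ system is the missing ingredient that makes your H\"older/Gronwall step close.
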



By swirl we denote
\benn u=rv_{\va} \eenn
From $(\ref{1.6-1.9})_1$ it follows that $u$ satisfies the equation
\benn
u_{,t}+ v\cdot \nb u -\nu \De u+ \frac{2\nu}{r} u_{,r} = 0
\eenn
\begin{lemma}(see \cite{CL}) \label{le-2.4}
Let $u(0)= r v_{\va}(0) \in L_{\infty}(\R^3).$ Then
\ben \label{2.8}
\|u\|_{L_{\infty}(0,T;L_{\infty}(\R^3))} \le \|u(0)\|_{L_{\infty}(\R^3)} \quad {\rm for \  any}\  T.
\een
\end{lemma}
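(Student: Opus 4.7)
The plan is to establish (\ref{2.8}) by deriving $L_p$ energy estimates for $u$ and passing to the limit $p\to\infty$. Working on a sufficiently smooth axisymmetric solution (justifying the smoothness by the standard increasing-regularity approximation and then passing to the limit), I would multiply the equation
\[
u_{,t}+v\cdot\nb u-\nu\De u+\frac{2\nu}{r}u_{,r}=0
\]
by $|u|^{p-2}u$, $p\in[2,\infty)$, and integrate over $\R^3$ with respect to the natural cylindrical measure $dx=r\,dr\,d\va\,dz$.

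Four contributions appear. The time derivative yields $\frac{1}{p}\frac{d}{dt}\|u\|_{L_p(\R^3)}^p$. The convective term equals $\frac{1}{p}\int_{\R^3}v\cdot\nb|u|^p\,dx$ and vanishes after integration by parts because $\Div v=0$. The dissipation term, after one integration by parts, produces the nonnegative quantity $\nu(p-1)\int_{\R^3}|\nb u|^2|u|^{p-2}\,dx$. The key first-order singular term reduces to
\[
\frac{2\nu}{p}\int_{\R^3}\frac{1}{r}(|u|^p)_{,r}\,dx=\frac{4\pi\nu}{p}\int_{-\infty}^{\infty}\!\int_0^\infty(|u|^p)_{,r}\,dr\,dz,
\]
and this vanishes because $u=rv_\va$ carries the prefactor $r$ (so $u|_{r=0}=0$ when $v_\va$ is bounded on the axis) and $u$ decays at infinity.

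Discarding the nonnegative dissipation contribution then gives $\frac{d}{dt}\|u\|_{L_p(\R^3)}^p\le 0$, whence $\|u(t)\|_{L_p(\R^3)}\le\|u(0)\|_{L_p(\R^3)}$ for every $t\in[0,T]$ and every $p\in[2,\infty)$. Sending $p\to\infty$ yields (\ref{2.8}).

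The main technical point is justifying the integration by parts near the axis and the vanishing of the resulting boundary term at $r=0$. On the smooth approximate solutions produced by the standard regularisation this is immediate from the structure $u=rv_\va$, and the bound transfers to the weak solution by passing to the limit in the approximation.
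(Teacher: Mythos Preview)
The paper does not give its own proof of this lemma; it simply cites \cite{CL}. Your $L_p$ energy argument---multiply the transport--diffusion equation for $u=rv_\va$ by $|u|^{p-2}u$, observe that the convective term vanishes by incompressibility, that the viscous term has a sign, and that the singular first-order term integrates to a boundary contribution which vanishes because $u|_{r=0}=0$, then send $p\to\infty$---is precisely the standard proof used in \cite{CL}, so your approach coincides with the one the paper is pointing to.

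One small technical remark: the passage $p\to\infty$ in $\|u(t)\|_{L_p}\le\|u(0)\|_{L_p}$ presupposes that $\|u(0)\|_{L_p}<\infty$ for large finite $p$, which the hypothesis $u(0)\in L_\infty(\R^3)$ alone does not guarantee. This is harmless here---one can either work on the approximating solutions (which have enough decay), or localise with a cut-off, or simply invoke the parabolic maximum principle directly for the equation $u_t+(v_r+\tfrac{\nu}{r})u_{,r}+v_zu_{,z}-\nu(u_{,rr}+u_{,zz})=0$, which has no zeroth-order term---but it is worth a sentence in the write-up.
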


\begin{remark}
From (\ref{2.2}) and (\ref{2.8}) we have
\ben \label{2.9} \bal \intop_{0}^t \Or |v_{\va}|^4 dx dt = \intop_{0}^t \Or r^2 v_{\va}^2{v_\varphi^2\over r^2} dx dt \\ \le \|r v_{\va}\|_{L_{\infty}(\R^3 \times (0,T))}^2\intop_{0}^t \Or {v_\varphi^2\over r^2} dx dt \le c\|u(0)\|^2_{L_{\infty}(\R^3)} \|v(0)\|^2_{L_2(\R^3)} \eal \een
\end{remark}

We recall also Lemma~3.1 from \cite{CFZ}
\begin{lemma}(see \cite{CFZ}) \label{le-2.6}
 Assume that $(v,p)$ is regular axisymmetric solution to the Navier-Stokes equations (\ref{NS}). Assume that $\frac{v_{\va}(0)}{\sqrt{r}}\in L_4(\R^3), \tilde{\nb}\frac{v_r}{r} \in L_{4/3}(0,T; L_2(\R^3))$ when $\tilde{\nb}=(\pa_r, \pa_z).$ Then the following estimate holds
\ben \label{2.10}
\frac{1}{2} \left\|\frac{v_{\va}^2(t)}{r}\right\|_{L_2(\R^3)}^2 + \frac{1}{4} \int_0^t \left\|\tilde{\nb}\frac{v_{\va}^2}{r}\right\|_{L_2(\R^3)}^2 dt'  + \frac{3}{4} \int_0^t \left\|\frac{v_{\va}}{r}\right\|_{L_4(\R^3)}^4 dt' \nn \\ \le \frac{1}{4}\exp[c\int_0^t \left\|\tilde{\nb}\frac{v_r}{r}\right\|_{L_2(\R^3)}^{4/3} dt' ] \left\|\frac{v_{\va}^2(0)}{r}\right\|_{L_2(\R^3)}^2.
\een

\end{lemma}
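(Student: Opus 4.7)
\emph{Proof strategy.}
The plan is to derive an evolution equation for $f:=v_\varphi^2/r$, carry out a weighted $L_2$-energy estimate whose dissipation captures both the $L_2$ norm of $\tilde{\nb}f$ and the $L_4^4$ norm of $v_\varphi/r$, control the single indefinite term by a H\"older--Sobolev--Young chain that produces the $4/3$ exponent, and close with Gronwall.

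Multiplying (\ref{2.4}) by $2v_\varphi$ and using $2v_\varphi\Delta v_\varphi=\Delta(v_\varphi^2)-2|\tilde{\nb}v_\varphi|^2$ gives an equation for $\eta:=v_\varphi^2$; dividing by $r$ and regrouping via $(1/r)v\cdot\nb\eta=v\cdot\nb f + v_r\eta/r^2$ yields
\benn
\partial_t f + v\cdot\nb f + 3(v_r/r)f - (\nu/r)\Delta\eta + (2\nu/r)|\tilde{\nb}v_\varphi|^2 + 2\nu f/r^2 = 0.
\eenn
Testing by $f$ and integrating over $\R^3$, the convective term drops by $\Div v=0$. An integration by parts produces $-\nu\Or(\eta/r^2)\Delta\eta\,dx=\nu\Or|\tilde{\nb}\eta|^2/r^2\,dx-2\nu\Or\eta^2/r^4\,dx$, and a further integration by parts in $r$ (boundary terms vanish since $\eta\sim r^2$ on the axis) gives $\Or\eta\eta_r/r^3\,dx=\Or\eta^2/r^4\,dx$, so the Hardy-type contribution $2\nu\Or f^2/r^2\,dx$ exactly cancels the leftover $-2\nu\Or\eta^2/r^4\,dx$. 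The algebraic identity $|\tilde{\nb}\eta|^2/r^2=|\tilde{\nb}f|^2+2\eta\eta_r/r^3-\eta^2/r^4$ then regroups the surviving dissipation as $\nu\|\tilde{\nb}f\|_{L_2(\R^3)}^2+\nu\|v_\varphi/r\|_{L_4(\R^3)}^4$ (using $\eta^2/r^4=v_\varphi^4/r^4$), with the non-negative term $2\nu\Or(\eta/r^2)|\tilde{\nb}v_\varphi|^2\,dx$ simply discarded.

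The sole indefinite contribution is $3\Or(v_r/r)f^2\,dx$. I would estimate it by H\"older with exponents $(6,\,12/5,\,12/5)$, followed by Sobolev $\|v_r/r\|_{L_6}\le C\|\tilde{\nb}(v_r/r)\|_{L_2}$ and Gagliardo--Nirenberg $\|f\|_{L_{12/5}}^2\le C\|f\|_{L_2}^{3/2}\|\tilde{\nb}f\|_{L_2}^{1/2}$, obtaining
\benn
\Bigl|\Or(v_r/r)f^2\,dx\Bigr|\le C\|\tilde{\nb}(v_r/r)\|_{L_2}\|f\|_{L_2}^{3/2}\|\tilde{\nb}f\|_{L_2}^{1/2}.
\eenn
Young's inequality with conjugate pair $(4,4/3)$ applied to the factor $\|\tilde{\nb}f\|_{L_2}^{1/2}$ splits this bound into an absorbable $(3\nu/4)\|\tilde{\nb}f\|_{L_2}^2$ and a Gronwall coefficient $C\|\tilde{\nb}(v_r/r)\|_{L_2}^{4/3}\|f\|_{L_2}^2$. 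After absorbing the first piece into the dissipation, Gronwall on $\|f\|_{L_2}^2$ supplies the exponential factor; reinserting and integrating in time recovers the dissipation bounds in (\ref{2.10}), up to the numerical bookkeeping of the constants $\tfrac12,\tfrac14,\tfrac34$ in the statement.

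The main obstacle is matching the exponent $4/3$. Natural alternative H\"older/interpolation splittings (for instance exponents $(3,3,3)$ combined with $\|v_r/r\|_{L_3}\le C\|v_r/r\|_{L_2}^{1/2}\|\tilde{\nb}(v_r/r)\|_{L_2}^{1/2}$) produce Gronwall multipliers such as $\|v_r/r\|_{L_2}\|\tilde{\nb}(v_r/r)\|_{L_2}$ or $\|\tilde{\nb}(v_r/r)\|_{L_2}^2$, neither of which delivers the $L_{4/3}$-in-time hypothesis used in (\ref{2.10}); only the specific Sobolev/Gagliardo--Nirenberg/Young chain above isolates the exponent $4/3$ cleanly.
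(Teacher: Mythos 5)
Your proposal is correct and follows essentially the same route as the paper: both hinge on the energy identity for $v_\varphi^2/r$ whose dissipation yields $\|\tilde{\nb}(v_\varphi^2/r)\|_{L_2}^2$ and $\|v_\varphi/r\|_{L_4}^4$, and both control the single indefinite term $\int (v_r/r)(v_\varphi^2/r)^2dx$ by the identical chain H\"older $\to$ Sobolev for $v_r/r$ $\to$ Gagliardo--Nirenberg for $v_\varphi^2/r$ $\to$ Young with exponents $(4,4/3)$ $\to$ Gronwall (your H\"older split $(6,12/5,12/5)$ versus the paper's $(6,2,3)$ produces the same product $\|\tilde{\nb}(v_r/r)\|_{L_2}\|v_\varphi^2/r\|_{L_2}^{3/2}\|\tilde{\nb}(v_\varphi^2/r)\|_{L_2}^{1/2}$). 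The only presentational difference is that the paper tests the $v_\varphi$-equation with $v_\varphi^3/r^2$ on the truncated domain $\R^3_\eps$ and lets $\eps\to0$, whereas you derive the equation for $v_\varphi^2/r$ and integrate by parts directly on $\R^3$, justifying the axis boundary terms via $v_\varphi^2\sim r^2$.
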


\begin{proof}
Consider the following problem for $v_{\va}$ in \\ $\R^3_{\eps}= \{x\in \R^3: r>\eps \}, \eps>0,$

\ben \label{2.11} \bal v_{\va,t} + v\cdot\nb v_{\va}-\nu \De v_{\va} + \nu \frac{v_{\va}}{r^2} + \frac{v_r v_{\va}}{r} =0, \\ v_{\va}|_{r=\eps}=0, \quad v_{\va}|_{r\rightarrow \infty} =0  \eal\een
Multiplying (\ref{2.11}) by $\frac{v_{\va}^3}{r^2},$ integrating over $\R^3_{\eps}$ and using boundary conditions yields
\benn
\frac{1}{4} \frac{d}{dt} \left\|\frac{v_{\va}^2}{r}\right\|_{L_2(\R^3_{\eps})}^2 + \frac{3}{4} \nu \left\|\tilde{\nb}\frac{v_{\va}^2}{r}\right\|_{L_2(\R^3_{\eps})}^2 +  \frac{3}{4}\nu
\left\|\frac{v_{\va}}{r}\right\|_{L_4(\R^3_{\eps})}^4  = - \frac{3}{2}\int_{\R^3_{\eps}}\frac{v_r}{r}\frac{v_{\va}^2}{r}\frac{v_{\va}^2}{r} dx \\ \le \frac{3}{2} \left\|\frac{v_r}{r}\right\|_{L_6(\R^3_{\eps})} \left\|\frac{v_{\va}^2}{r}\right\|_{L_2(\R^3_{\eps})} \left\|\frac{v_{\va}^2}{r}\right\|_{L_3(\R^3_{\eps})} \le c \left\|\tilde{\nb}\frac{v_r}{r}\right\|_{L_2(\R^3_{\eps})} \left\|\frac{v_{\va}^2}{r}\right\|^{3/2}_{L_2(\R^3_{\eps})} \left\|\tilde{\nb}\frac{v_{\va}^2}{r}\right\|^{1/2}_{L_2(\R^3_{\eps})} \\
\le \frac{\nu}{2} \left\|\tilde{\nb}\frac{v_{\va}^2}{r}\right\|^2_{L_2(\R^3_{\eps})}
+ c \left\|\frac{v_{\va}^2}{r}\right\|^{2}_{L_2(\R^3_{\eps})}
\left\|\tilde{\nb}\frac{v_{r}}{r}\right\|^{4/3}_{L_2(\R^3_{\eps})}.
\eenn
Simplifying we have
\benn
\frac{1}{4} \frac{d}{dt} \left\|\frac{v_{\va}^2}{r}\right\|_{L_2(\R^3_{\eps})}^2
+ \frac{\nu}{4} \left\|\tilde{\nb}\frac{v_{\va}^2}{r}\right\|_{L_2(\R^3_{\eps})}^2
+  \frac{3}{4}\nu \left\|\frac{v_{\va}}{r}\right\|_{L_4(\R^3_{\eps})}^4
\le c \left\|\frac{v_{\va}^2}{r}\right\|^{2}_{L_2(\R^3_{\eps})}
\left\|\tilde{\nb}\frac{v_{r}}{r}\right\|^{4/3}_{L_2(\R^3_{\eps})}.
\eenn
By the Gronwall lemma we have
\benn \frac{1}{4} \left\|\frac{v_{\va}^2(t)}{r}\right\|_{L_2(\R^3_{\eps})}^2
+ \frac{\nu}{4} \int_0^t \left\|\tilde{\nb}\frac{v_{\va}^2}{r}\right\|_{L_2(\R^3_{\eps})}^2 dt'
+ \frac{3}{4}\nu \int_0^t \left\|\frac{v_{\va}}{r}\right\|_{L_4(\R^3_{\eps})}^4 dt'
\\ \le \frac{1}{4}\exp[c\int_0^t \left\|\tilde{\nb}\frac{v_r}{r}\right\|_{L_2(\R^3_{\eps})}^{4/3} dt' ]
\left\|\frac{v_{\va}^2(0)}{r}\right\|_{L_2(\R^3_{\eps})}^2. \eenn
Passing with $\eps \rightarrow 0$ we derive (\ref{2.10}) and conclude the proof.
\end{proof}
\begin{remark}
Formula (2.4) in \cite{CFZ} has the form
\ben \label{2.11a} \left\|\tilde{\nb}\frac{v_r}{r}\right\|_{L_q(\R^3)} \le c(q)\left\|\frac{\om_{\va}}{r}\right\|_{L_q(\R^3)}, \quad 1<q<\infty. \een
\end{remark}
Consider problem (\ref{1.11a}).
\begin{lemma} \label{le-2.8}
Let the assumptions of Lemma~2.6 be satisfied. Assume additionally that $w_1(0) \in L_2(\R^3), u_1(0) \in L_4(0,t;L_4(\R^3)).$ Then the following estimate holds
\ben \label{2.12} \bal
\frac{1}{2}\|\om_1(t)\|^2_{L_2(\R^3)}+\frac{\nu}{2}\int_0^t\|\nb \om_1(t')\|^2_{L_2(\R^3)} dt'  \\ \le \frac{2}{\nu}\int_0^t\|u_1(t')\|^4_{L_4(\R^3)} dt'+ \frac{1}{2}\|\om_1(0)\|^2_{L_2(\R^3)}, \quad t\le T.
\eal \een
\end{lemma}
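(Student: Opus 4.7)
The plan is to carry out the standard $L_2(\R^3)$ energy estimate for equation (\ref{1.7}): multiply by $\omega_1$, integrate over $\R^3$ with respect to $dx$, and close the estimate by absorbing a small multiple of the resulting dissipation back into the viscous term.

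I would then verify the three structural contributions in turn. The transport term vanishes by axisymmetric incompressibility: after rewriting as $\tfrac12\int v\cdot\nb(\omega_1^2)\,dx$ and integrating by parts in cylindrical coordinates (with $dx = 2\pi r\,dr\,dz$), the axis contribution drops because $v_r(0,z)=0$ and the remaining volume integral is a multiple of $\int(v_{r,r}+v_r/r+v_{z,z})\omega_1^2\,dx$, which vanishes by (\ref{2.6}). The viscous term, by writing $\De + \tfrac{2}{r}\pa_r = \pa_r^2 + \tfrac{3}{r}\pa_r + \pa_z^2$ and integrating by parts in $r$ and $z$, yields
\benn
-\nu\int_{\R^3}\bigl(\De\omega_1+\tfrac{2}{r}\omega_{1,r}\bigr)\omega_1\,dx = \nu\|\nb\omega_1\|^2_{L_2(\R^3)} + 2\pi\nu\int_{\R}\omega_1^2(0,z)\,dz,
\eenn
and the non-negative axis boundary term is simply discarded. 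For the source I integrate by parts in $z$ to rewrite $2\int u_1 u_{1,z}\omega_1\,dx = -\int u_1^2\omega_{1,z}\,dx$, and H\"older together with Young gives
\benn
\left|\int u_1^2\omega_{1,z}\,dx\right| \le \|u_1\|^2_{L_4(\R^3)}\|\omega_{1,z}\|_{L_2(\R^3)} \le \tfrac{\nu}{2}\|\nb\omega_1\|^2_{L_2(\R^3)} + \tfrac{c}{\nu}\|u_1\|^4_{L_4(\R^3)}.
\eenn
Absorbing the $\tfrac{\nu}{2}\|\nb\omega_1\|^2$ contribution into the viscous dissipation on the left and integrating in $t$ over $(0,t)$ then delivers (\ref{2.12}).

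The main obstacle is the cylindrical integration by parts near the axis $r=0$: because of the singular factor $2/r$ in the viscous operator $\De + \tfrac{2}{r}\pa_r$, one must check that the boundary contribution at the axis is well defined and enters with the good (non-negative) sign. The direct cylindrical calculation confirms this, and alternatively one can mirror the proof of Lemma~\ref{le-2.6} by first performing the estimate on $\R^3_\eps = \{r>\eps\}$ with the Dirichlet condition $\omega_1|_{r=\eps}=0$ (so no axis boundary term ever appears) and then passing to the limit $\eps\to 0$. The hypotheses $\omega_1(0)\in L_2(\R^3)$ and $u_1\in L_4(0,T;L_4(\R^3))$ are precisely what is needed to make the right-hand side of (\ref{2.12}) finite.
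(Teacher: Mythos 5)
Your proposal is correct and follows essentially the same route as the paper: an $L_2$ energy estimate for (\ref{1.7}) in which the transport term vanishes by incompressibility, the viscous term produces the full dissipation plus a non-negative axis contribution, and the source $2u_1u_{1,z}$ is absorbed via H\"older and Young into $\tfrac{\nu}{2}\|\nb\om_1\|^2 + \tfrac{c}{\nu}\|u_1\|^4_{L_4}$ (the paper carries this out on $\R^3_\eps$ with $\om_1|_{r=\eps}=0$ and lets $\eps\to 0$, exactly the alternative you describe). No gaps.
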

\begin{proof}
Consider the problem in $\R^3_{\eps}$
\ben \label{2.13} \bal
\om_{1,t}+ v\cdot \nb \om_1 - \nu \left(\De \om_1+\frac{2}{\nu}\om_{1,r}\right)= 2u_1u_{1,z} \\
\om_1|_{r=\eps}=0, \quad \om_1|_{r\rightarrow \infty} =0  \eal
\een
Multiplying (\ref{2.13}) by $\om_1$, integrating over $\R^3_{\eps}$, using the boundary conditions yields
\benn  \frac{1}{2}\frac{d}{dt}\|\om_1(t)\|^2_{L_2(\R^3_{\eps})}+\frac{\nu}{2}\|\nb \om_1\|^2_{L_2(\R^3_{\eps})} +\nu \int_{\R}\om_1|^2_{r=\eps} dz \\ \le \frac{2}{\nu}\int_{\R^3_{\eps}}|u_1|^4 dx .  \eenn
Integrating with respect to time and passing with $\eps \rightarrow 0$ gives (\ref{2.12}). This concludes the proof.
\end{proof}
Introduce the quantities
\benn (\Phi, \Ga) = \left(\frac{\om_r}{r}, \frac{\om_{\va}}{r}\right) \eenn
which are solutions to the equations
\benn
\pa_t \Phi + v\cdot \nb \Phi - \nu (\De + \frac{2}{r})\Phi - (\om_r \pa_r + \om_z\pa_z)\frac{v_r}{r} =0 \\
\pa_t \Ga + v\cdot \nb \Ga - \nu (\De + \frac{2}{r})\Ga + 2 \frac{v_r}{r}\Phi =0
\eenn
\begin{remark}
In the proof of Theorem~1.1, Case 1 in \cite{CFZ} there is derived the formula (3.8) in \cite{CFZ} in the form
\ben \bal  \label{fi-ga} \|\Phi(t)\|^2_{L_2(\R^3)}+ \|\Ga(t)\|^2_{L_2(\R^3)}\\ +\nu \int_0^t (\|\tilde{\nb} \Phi(t)\|^2_{L_2(\R^3)} +\|\tilde{\nb}\Ga\|^2_{L_2(\R^3)}) dt' \\  \le \exp[c(1+\|r^dv_{\va}\|_{L_{p,q}(\R^3\times(0,t))})](\|\Phi(0)\|^2_{L_2(\R^3)}+ \|\Ga(0)\|^2_{L_2(\R^3)} )
\eal \een
where \benn \qquad \qquad \qquad  3/p+2/q \le 1-d, 0\le d< 1, \\ \frac{3}{1-d}  < p\le \infty, \frac{3}{1-d} \le q < \infty. \eenn
\end{remark}

Let us recall some properties of weak solutions to (\ref{NS}).
\begin{lemma}(see \cite{LSU}, Ch.2, Sect.3) \label{le-2.7}
For arbitrary $v\in L_{\infty}(0,T;L_2(\Om)$ $\cap L_2(0,T;H^1(\Om))$ the inequality holds:
\ben \label{2.19}
\qquad \qquad \|v\|^2_{L_{p,q}(\R^3\times (0,T))} \le c \sup_{0\le t \le T}\int_{\R^3} |v|^2 dx + c\int_0^T dt \int_{\R^3}|\nb v|^2 dx,
\een
where $$\frac{3}{p}+\frac{2}{q} \ge \frac{3}{2}.$$
\end{lemma}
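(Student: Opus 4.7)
The plan is to prove the inequality by Gagliardo--Nirenberg interpolation in the space variable combined with an $L_q$ estimate in time and Young's inequality. The scaling-invariant identity $\frac{3}{p}+\frac{2}{q}=\frac{3}{2}$ is precisely the relation that emerges when one interpolates $v(t)$ between $L_2(\R^3)$ and $L_6(\R^3)\hookleftarrow H^1(\R^3)$ at each fixed $t$ and then integrates the resulting pointwise-in-time estimate against $\int_0^T\|\nabla v\|_{L_2(\R^3)}^2\,dt$. Any exponent pair with strict inequality $>\frac{3}{2}$ will afterwards be absorbed by H\"older in time on the finite interval $(0,T)$.

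Concretely, for each fixed $t\in(0,T)$ and $p\in[2,6]$, the Gagliardo--Nirenberg inequality on $\R^3$ yields
$$\|v(t)\|_{L_p(\R^3)}\le c\,\|v(t)\|_{L_2(\R^3)}^{1-\theta}\,\|\nabla v(t)\|_{L_2(\R^3)}^{\theta},\qquad \theta=\tfrac{3}{2}-\tfrac{3}{p}.$$
Raising to the $q$-th power, pulling the $L_2$-factor out as a supremum in $t$, and choosing $q$ so that $q\theta=2$, equivalently $\frac{3}{p}+\frac{2}{q}=\frac{3}{2}$, integration in $t$ gives
$$\int_0^T\|v(t)\|_{L_p(\R^3)}^q\,dt\le c\,\Bigl(\sup_{0\le t\le T}\|v(t)\|_{L_2(\R^3)}\Bigr)^{q-2}\int_0^T\|\nabla v(t)\|_{L_2(\R^3)}^2\,dt.$$
Taking the $\tfrac{2}{q}$-th power and applying Young's inequality with exponents $\tfrac{q}{q-2}$ and $\tfrac{q}{2}$ turns the product into a sum, which after renaming constants is exactly (\ref{2.19}) in the critical (equality) case.

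The remaining range $\frac{3}{p}+\frac{2}{q}>\frac{3}{2}$ reduces to the critical case by H\"older's inequality in time on the finite interval $(0,T)$: given such a pair $(p,q)$ with $p\in[2,6]$, pick a critical companion pair $(p,q_*)$ with $q_*\ge q$ and apply $\|v\|_{L_{p,q}}\le T^{1/q-1/q_*}\|v\|_{L_{p,q_*}}$. The only---and rather mild---obstacle is bookkeeping of admissible exponent ranges, specifically keeping $p\in[2,6]$ so that $H^1(\R^3)\hookrightarrow L_p(\R^3)$ holds and the interpolation exponent $\theta\in[0,1]$ is well defined. Since the Gagliardo--Nirenberg constant on $\R^3$ is independent of $t$, no further analytic difficulty arises.
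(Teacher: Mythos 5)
Your argument is correct and complete: the Gagliardo--Nirenberg interpolation with $\theta=\tfrac{3}{2}-\tfrac{3}{p}$, the choice $q\theta=2$, Young's inequality, and the reduction of the subcritical case $\tfrac{3}{p}+\tfrac{2}{q}>\tfrac{3}{2}$ to the critical one by H\"older in time (at the cost of a $T$-dependent constant) is exactly the standard proof of this embedding. The paper itself gives no proof but cites \cite{LSU}, where the same interpolation argument is used, so your proposal matches the intended route; your remark that one must implicitly restrict to $p\in[2,6]$ is a correct and worthwhile observation about the lemma's statement.
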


\section{Sufficient conditions for regularity}
\setcounter{equation}{0}%
\setcounter{lemma}{0}%



Let \benn u_{\al} = \frac{u}{r^{\al}}, \quad \al \in (0,1). \eenn
Then $u_{\al}$ satisfies
\ben \label{3.1}
u_{\al,t} + v\cdot\nb u_{\al}+ \al\frac{v_r}{r}u_{\al}- \nu\De u_{\al}+\frac{2\nu(1-\al)}{r}u_{\al,r} + \frac{\nu\al(2-\al)}{r^2}u_{\al}=0, \nn \\ u_{\al}|_{t=0}=u_{\al}(0).
 \een
\begin{lemma}
Let $u(0) \in L_{\infty}(\R^3)$. Assume that $c_1$ is a constant and
\benn 
\int_0^T\int_{\R^3} \frac{v_r^2}{r^3} dx dt \le c_1^2,
\eenn
Let \benn c_2= \frac{1}{4\nu\al(2-\al)}\|u(0)\|^s_{L_{\infty}(\R^3)}c_1^2
+ \frac{1}{s} \|\|u(0)\|^s_{L_s(\R^3)} \\
{\rm where} \  \al=\frac{3}{s}, \ u_{\al}= v_{\va}r^{1-\al}.\eenn
Then
\ben \bal \label{3.3} r^d v_{\va}\in L_{p,q}(\R^3\times (0,T)), \ \frac{3}{p}+ \frac{2}{q} \ge \frac{3}{s}=\al,\\ d=1-\al \ {\rm and} \  \|r^d v_{\va}\|^s_{L_{p,q}(\R^3\times (0,T))} \le c_2. \eal \een
\end{lemma}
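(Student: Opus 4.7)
The plan is to carry out an $L_s$ energy estimate for $u_{\al}$ using equation (\ref{3.1}), absorb the only sign-indefinite term via Young's inequality exploiting both hypothesis (\ref{1.10}) and the $L_\infty$ bound on the swirl $u=rv_{\va}$ supplied by Lemma~\ref{le-2.4}, and then interpolate the resulting bounds to reach the anisotropic Lebesgue space $L_{p,q}$.

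First I would truncate near the axis, working on $\R^3_{\eps}=\{x\in\R^3:r>\eps\}$ with boundary condition $u_{\al}|_{r=\eps}=0$. Multiplying (\ref{3.1}) by $|u_{\al}|^{s-2}u_{\al}$ and integrating, the convection integral vanishes because $\Div v=0$. The lower-order radial term $\frac{2\nu(1-\al)}{r}u_{\al,r}$, when rewritten in cylindrical coordinates so that the Jacobian $r$ cancels the weight $1/r$, becomes a total derivative in $r$ whose only contribution is a boundary term at $r=\eps$ that vanishes. The diffusion contributes $\frac{4\nu(s-1)}{s^2}\|\nb|u_{\al}|^{s/2}\|_{L_2(\R^3_{\eps})}^2$, and the last term in (\ref{3.1}) produces the coercive Hardy integral $\nu\al(2-\al)\int|u_{\al}|^s/r^2\,dx$. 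One is left with
\begin{equation*}
\tfrac{1}{s}\tfrac{d}{dt}\|u_{\al}\|_{L_s(\R^3_{\eps})}^s+\tfrac{4\nu(s-1)}{s^2}\|\nb|u_{\al}|^{s/2}\|_{L_2(\R^3_{\eps})}^2+\nu\al(2-\al)\int\frac{|u_{\al}|^s}{r^2}\,dx=-\al\int\frac{v_r}{r}|u_{\al}|^s\,dx.
\end{equation*}

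The heart of the proof is the bound on the right-hand side. Since $\al s=3$, one has $|u_{\al}|^s=|u|^s/r^3$, and Lemma~\ref{le-2.4} yields $|u|\le\|u(0)\|_{L_\infty}$, hence the pointwise identity $r|u_{\al}|^{2s}\le\|u(0)\|_{L_\infty}^s|u_{\al}|^s/r^2$. Splitting $(|v_r|/r)|u_{\al}|^s=(|v_r|/r^{3/2})(r^{1/2}|u_{\al}|^s)$ and applying Young's inequality with the weight tuned so that the absorbable factor equals $\nu\al(2-\al)$ produces
\begin{equation*}
\al\int\frac{|v_r|}{r}|u_{\al}|^s\,dx\le\frac{\|u(0)\|_{L_\infty}^s}{4\nu\al(2-\al)}\int\frac{v_r^2}{r^3}\,dx+\nu\al(2-\al)\int\frac{|u_{\al}|^s}{r^2}\,dx.
\end{equation*}
Absorbing the second term on the left, integrating in time, sending $\eps\ra 0$ and invoking (\ref{1.10}) gives
\begin{equation*}
\tfrac{1}{s}\sup_{0\le t\le T}\|u_{\al}(t)\|_{L_s(\R^3)}^s+\tfrac{4\nu(s-1)}{s^2}\int_0^T\|\nb|u_{\al}|^{s/2}\|_{L_2(\R^3)}^2\,dt\le c_2,
\end{equation*}
with $c_2$ exactly as stated.

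To obtain (\ref{3.3}) I would interpolate. From $|u_{\al}|^{s/2}\in L_\infty(0,T;L_2(\R^3))\cap L_2(0,T;H^1(\R^3))$ and the Sobolev embedding $H^1(\R^3)\hookrightarrow L_6(\R^3)$, one deduces $u_{\al}\in L_s(0,T;L_{3s}(\R^3))$. Interpolating between this and $L_\infty(0,T;L_s(\R^3))$ by H\"older in space and time gives $u_{\al}\in L_{p,q}(\R^3\times(0,T))$ on the scaling line $3/p+2/q=3/s=\al$; a further H\"older step in $t$, permitted because $T<\infty$, extends the bound to the full range $3/p+2/q\ge\al$. Since $u_{\al}=r^{1-\al}v_{\va}=r^d v_{\va}$, this is exactly (\ref{3.3}). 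I expect the main obstacle to be arranging the Young absorption so that the coefficient in front of $\int v_r^2/r^3\,dx$ comes out to be precisely $\|u(0)\|_{L_\infty}^s/[4\nu\al(2-\al)]$ as stated; a secondary technical point is the careful handling near the axis through the truncation $\R^3_{\eps}$ and the boundary condition $u_{\al}|_{r=\eps}=0$, which is what kills the $u_{\al,r}/r$ contribution and keeps all weighted integrals finite during the passage $\eps\ra 0$.
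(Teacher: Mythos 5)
Your proposal is correct and follows essentially the same route as the paper: the $L_s$ test function $u_{\al}|u_{\al}|^{s-2}$ for (\ref{3.1}), absorption of the $\al\int\frac{v_r}{r}|u_{\al}|^s$ term into the Hardy term $\nu\al(2-\al)\int|u_{\al}|^s/r^2$ via Young's inequality together with the maximum principle for the swirl (Lemma~\ref{le-2.4}) and the identity $\al s=3$, followed by the energy-space interpolation into $L_{p,q}$ (which the paper invokes as Lemma~\ref{le-2.7} rather than rederiving from Sobolev embedding as you do). The only differences are cosmetic: truncation to $\R^3_{\eps}$ versus the paper's density argument with $C^{\infty}_0(\R^3\setminus I)$, and slightly more explicit constant tracking on your part.
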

\begin{proof}
Multiplying (\ref{3.1}) by $u_{\al}|u_{\al}|^{s-2},$ integrating the result over $\R^3$ and
using that $u_{\al} \in C^{\infty}_0(\R^3\setminus I)$, we obtain
\benn
\qquad \frac{1}{s}\frac{d}{dt}\Or|u_{\al}|^s dx+ \al\Or\frac{v_r}{r}|u_{\al}|^s dx + \nu \Or|\nb |u_{\al}|^{s/2}|^2 dx \\ + \nu \al (2-\al)\Or\frac{|u_{\al}|^s}{r^2} dx = 0.
\eenn
The second term in the l.h.s. of the above equality can be estimated by
\benn
\left|\Or v_r|u_{\al}|^{s/2}\frac{|u_{\al}|^{s/2}}{r} dx \right| \le \frac{\eps}{2} \Or \frac{|u_{\al}|^{s}}{r^2} dx +\frac{1}{2\eps} \Or v_r^2|u_{\al}|^s dx \equiv I
\eenn
Using Lemma~\ref{le-2.4}, the second integral in $I$ is bounded by
\benn
\|u(0)\|^s_{L_{\infty}(\R^3)} \Or\frac{v_r^2}{r^{\al s}} dx.
\eenn
Employing this estimate, with $\eps=\nu\al(2-\al),$ integrating the result with respect to
time and using the density argument, yields
\ben \label{3.4} \bal
\frac{1}{s}\Or|u_{\al}(t)|^s dx + \nu \int_0^t dt' \Or|\nb |u_{\al}|^{s/2}|^2 dx \\ + \frac{\nu\al(2-\al)}{2} \int_0^t dt' \Or\frac{|u_{\al}|^s}{r^2} dx \\ \le \frac{\|u(0)\|^s_{L_{\infty}(\R^3)}}{4\nu\al(2-\al)}  \int_0^t dt' \Or\frac{v_r^2}{r^{\al s}}dx + \frac{1}{s}\Or |u_{\al}(0)|^s dx, \quad t\le T.
\eal \een
Let us assume that the r.hs. of (\ref{3.4}) is bounded by a constant $c_2.$ Then Lemma~\ref{le-2.7} implies
\benn
\|u_{\al}^{s/2}\|_{L_{p,q}(\R^3\times (0,T))} \le c_2^{1/2} \quad {\rm with} \quad \frac{3}{p}+ \frac{2}{q} \ge \frac{3}{2}.
\eenn
Hence
\benn
\|u_{\al}\|_{L_{p',q'}(\R^3\times (0,T))}^{s/2} =
\|u_{\al}\|^{s/2}_{L_{sp/2,sq/2}(\R^3\times (0,T))} \le c_2^{1/2}
\eenn where \benn \frac{3}{p'}+ \frac{2}{q'} \ge \frac{3}{2}\cdot \frac{2}{s}= \frac{3}{s}. \eenn
Comparing the above approach with (\ref{fi-ga}) we have that
$d=1-\al$ and the regularity criterion has the form
\benn
r^dv_{\va}\in L_{q'}(0,T;L_{p'}(\R^3)), \quad \frac{3}{p'}+\frac{2}{q'} \le 1-d,\quad 0\le d<1.
\eenn
Hence
\benn
\frac{3}{p'}+ \frac{2}{q'} \ge \frac{3}{s} = \al. \eenn
Therefore $\al s=3.$   \end{proof}

\begin{corollary}
From (\ref{3.3}), (\ref{fi-ga}) and (\ref{2.10}) we obtain
\benn 
\left\|\frac{v_{\va}^2}{r}\right\|_{L_2(\R^3)}^2 + \int_0^t \left\|\tilde{\nb}\frac{v_{\va}^2}{r}\right\|_{L_2(\R^3)}^2 dt' \le c_3, \quad t\le T, \eenn
where $c_3$ depends on the constants from the r.h.s. of (\ref{3.3}),(\ref{fi-ga}) and (\ref{2.10}).
\end{corollary}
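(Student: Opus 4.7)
The plan is to chain the three cited estimates in the order \eqref{fi-ga} $\Rightarrow$ control of $\tilde\nb(v_r/r)$ $\Rightarrow$ closure of \eqref{2.10}. So I would work backwards from the goal: the right-hand side of \eqref{2.10} is an exponential of $\int_0^t\|\tilde\nb(v_r/r)\|_{L_2}^{4/3}dt'$, hence it suffices to bound this integral by a constant depending only on the data and on the quantities already controlled in \eqref{3.3} and \eqref{fi-ga}.

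First I would invoke Remark~\ref{le-2.7} (formula \eqref{2.11a}) with $q=2$, which gives the pointwise-in-time bound
\[
\left\|\tilde\nb\frac{v_r}{r}\right\|_{L_2(\R^3)}\le c\,\|\Ga(t)\|_{L_2(\R^3)}.
\]
So the task reduces to controlling $\|\Ga\|_{L_\infty(0,T;L_2)}$. For that I would plug the outcome of Lemma~3.1, namely \eqref{3.3}, directly into the exponential on the right-hand side of \eqref{fi-ga} (choosing $d=1-\al$ and $(p,q)$ admissible in the sense required there, which matches exactly the admissibility condition in \eqref{3.3}). This yields
\[
\sup_{0\le t'\le t}\|\Ga(t')\|_{L_2(\R^3)}^2\le \exp\!\bigl[c(1+c_2^{1/s})\bigr]\bigl(\|\Phi(0)\|_{L_2}^2+\|\Ga(0)\|_{L_2}^2\bigr)\equiv K_1,
\]
where the right-hand side is finite thanks to the standing assumption $\omega_r(0)/r,\omega_\va(0)/r\in L_2(\R^3)$ in Theorem~\ref{th1}.

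Combining the previous two displays and estimating crudely by H\"older in time,
\[
\int_0^t\left\|\tilde\nb\frac{v_r}{r}\right\|_{L_2(\R^3)}^{4/3}dt'\le c\,T\,K_1^{2/3}\equiv K_2,
\]
so that the exponential factor on the right-hand side of \eqref{2.10} is bounded by $e^{cK_2}$. Substituting into \eqref{2.10} and using the hypothesis $v_\va^2(0)/r\in L_2(\R^3)$ yields the claimed bound with $c_3=c_3(K_2,\|v_\va^2(0)/r\|_{L_2})$.

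The only substantive point, and the step I would double-check most carefully, is that the parameters $(p,q,d,\al)$ supplied by \eqref{3.3} really fall within the admissible range displayed after \eqref{fi-ga}; this is why the corollary is stated as a consequence of the combination of the three inequalities rather than of any one of them alone. Everything else is H\"older in time, the elliptic estimate \eqref{2.11a}, and the exponential Gronwall form of \eqref{2.10}; no further PDE argument is needed.
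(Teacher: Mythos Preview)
Your proposal is correct and follows exactly the chain the paper intends: use \eqref{3.3} to feed the exponential in \eqref{fi-ga}, deduce $\|\Ga\|_{L_\infty(0,T;L_2)}\le K_1$, convert this via \eqref{2.11a} into a bound on $\int_0^t\|\tilde\nb(v_r/r)\|_{L_2}^{4/3}dt'$, and close \eqref{2.10}. The paper states the corollary without a separate proof, but this same logical chain is spelled out verbatim in the proof of Theorem~\ref{th1}, so your argument matches the authors' reasoning.
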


 In view of (\ref{2.19}) we have
\ben \label{3.6}
\left\|\frac{v_{\va}^2}{r}\right\|_{L_{p,q}(\R^3\times (0,T))} \le cc_3 \een
where
\benn 
\frac{3}{p} + \frac{2}{q} \ge \frac{3}{2}.\eenn
Let $\R^3_{r_0} = \{x\in \R^3: r\le r_0 \}.$ Then (\ref{3.6}) implies
\ben \label{3.8}
 \left\|{v_{\va}^2}\right\|_{L_{p,q}(\R^3_{r_0}\times (0,T))}\le  \left\|\frac{v_{\va}^2}{r}\right\|_{L_{p,q}(\R^3_{r_0}\times (0,T))} \le cc_3
\een

Hence (\ref{3.8}) implies that $v_{\va} \in L_{p',q'}(\R^3_{r_0}\times (0,T))$ where
\benn 
\frac{3}{p'} + \frac{2}{q'} \ge \frac{3}{4}.\eenn

Consider Lemma~\ref{le-KP}. Let $s=4$. Then $r=8$, so \benn \frac{3}{4} + \frac{2}{8} =1 \eenn
Since $\frac{3}{4} < 1, v_{\va}$ satisfies assumptions of Lemma~\ref{le-KP}. Hence $v$ has no singular points in $\R^3_{r_0}\times (0,T).$

Next we show that axisymmetric solutions to problem (\ref{NS}) do not have singular points in the region located in a positive distance from the axis of symmetry.

\underline{}In \cite{CKN} is shown that singular points of $v$ in the axisymmetric case may appear on the axis of symmetry only. Therefore in any region located in a positive distance from the axis of symmetry there is no singular points of $v.$ However, we want to show that statement explicitly. Therefore, we proceed as follows.

Consider equation $(\ref{1.6-1.9})_1$. Let $\chi= \chi(r)$ be a smooth function such that
\benn
\chi(r) = \left\{ \begin{array}{lr} 1 & r\ge 2r_0 \\ 0 & r\le r_0    \end{array} \right.
 \eenn
We multiply $(\ref{1.6-1.9})_1$ by $\chi$ and introduce the notation $ \hat{v}_{\va} = v_{\va} \chi.$ Then $\hat{v}_{\va}$ satisfies
\ben \bal \label{3.10}
\hat{v}_{\varphi,t}+v\cdot\nabla \hat{v}_\varphi -\nu\Delta \hat{v}_\varphi +\frac{v_r}{r}\hat{v}_\varphi
+ \nu \frac{\hat{v}_\varphi}{r^2} \\ = v \cdot\nb \chi v_{\va}
-2\nu \nb v_{\va} \nb \chi - \nu v_{\va} \De \chi \\
\hat{v}_{\va}|_{t=0} = \hat{v}_{\va}(0).
 \eal \een
Multiplying (\ref{3.10}) by $\hat{v}_{\va}|\hat{v}_{\va}|^{s-2}$ and integrating over $\R^3\times (0,t)$ yields
\ben \bal \label{3.11} \frac{1}{s} \|\hat{v}_{\va}(t)\|^s_{L_s(\R^3)}  + \nu \int_0^t \|\nb|\hat{v}_{\va}|^{s/2}\|^2_{L_2(\R^3)} dt' \\ +
\nu \int_0^t \|\frac{|\hat{v}_{\va}|^{s/2}}{r}\|^2_{L_2(\R^3)} dt' + \int_0^t \Or \frac{v_r}{r} |\hat{v}_{\va}|^s dx dt' \\ = \int_0^t \Or (v\cdot \nb\chi v_{\va} - 2\nu \nb v_{\va} \nb\chi - \nu v_{\va} \De \chi)\hat{v}_{\va}|\hat{v}_{\va}|^{s-2} dx dt'.  \eal \een
In view of Lemma~\ref{le-2.7} the first two terms on the l.h.s. of (\ref{3.11}) are estimated from below by
\ben \label{3.12} \|\hat{v}_{\va}\|^s_{L_{\frac{5}{3}s}(\R^3\times(0,t))} \een
Next we estimate non-positive terms in (\ref{3.11}). In view of the energy estimate (\ref{2.1}) the last term on the l.h.s. of (\ref{3.11}) is bounded by \benn 
\|v_r\|_{L_{\frac{10}{3}}(\R^3\times(0,t))} \|\hat{v}_{\va}\|^s_{L_{\frac{10}{7}s}(\R^3\times(0,t))} \eenn
where the second factor can be always absorbed by (\ref{3.12}) because $5/3 >10/7.$
In view of (\ref{2.9}) and (\ref{2.1}) the first integral on the r.h.s. of (\ref{3.11}) is bounded by
\ben \label{3.14}
\|v\|_{L_{\frac{10}{3}}(\R^3\times(0,t))} \|v_{\va}\|_{L_4(\R^3\times(0,t))} \|\hat{v}_{\va}\|^{s-1}_{L_{\frac{20}{9}(s-1)}(\R^3\times(0,t))}. \een
The last factor in (\ref{3.14}) can be absorbed by (\ref{3.12}) for $\frac{20}{9}(s-1) \le \frac{5}{3}s$ which holds for $s\le 4.$ Then (\ref{3.12}) yields estimate for
\benn 
\|\hat{v}_{\va}\|_{L_{\frac{20}{3}}(\R^3\times(0,t))}.
 \eenn
Finally we estimate the last two integrals on the r.h.s. of (\ref{3.11}). We write them in the form
\benn
-\nu \int_0^t \Or (2\nb v_{\va} \nb\chi + v_{\va} \De\chi)\hat{v}_{\va}|\hat{v}_{\va}|^{s-2} dx dt'
\\ = -\nu \int_0^t \Or (2 \nb v_{\va} \nb\chi {v}_{\va}\chi|\hat{v}_{\va}|^{s-2}
+ v_{\va} \De\chi\hat{v}_{\va}|\hat{v}_{\va}|^{s-2}) dx dt'
\\  = -\nu \int_0^t \Or (\nb v^2_{\va} \nb\chi \chi |\hat{v}_{\va}|^{s-2}
+ v_{\va} \De\chi\hat{v}_{\va}|\hat{v}_{\va}|^{s-2}) dx dt' \\
= -\nu \int_0^t \Or (- v^2_{\va} \De\chi \chi |\hat{v}_{\va}|^{s-2}
-  v^2_{\va} |\nb\chi|^2 |\hat{v}_{\va}|^{s-2} \\ - v^2_{\va} \nb\chi \chi \nb|\hat{v}_{\va}|^{s-2}
+ v_{\va} \De \chi \hat{v}_{\va} |\hat{v}_{\va}|^{s-2})dx dt'
\\ = \nu \int_0^t \Or (v^2_{\va} |\nb\chi|^2 |\hat{v}_{\va}|^{s-2}
+ v^2_{\va} \nb\chi \chi \nb|\hat{v}_{\va}|^{s-2})dx dt'
= I_1+I_2,
\eenn
where
\benn |I_1| \le c\|v_{\va}\|_{L_4(\R^3)\times (0,t)}
\|\hat{v}_{\va}\|^{s-2}_{L_{2(s-2)}(\R^3)\times(0,t)}
\eenn
and the first factor is bounded in view of Remark~2.5 and the second factor
is absorbed by (\ref{3.12}) for $2(s-2)\le \frac{5}{2} s$ which holds for $s\le 12.$
Finally
\benn I_2= \nu(s-2)\int_0^t\Or v^2_{\va}\nb\chi\chi|\hat{v}_{\va}|^{s-3}\nb|\hat{v}_{\va}|dx dt'
\\ \le \frac{\nu(s-2)}{s} \int_0^t\Or v^2_{\va}\nb\chi\chi|\hat{v}_{\va}|^{s/2-2}\nb|\hat{v}_{\va}|^{s/2}dx dt'
\\ \le \frac{\eps}{2} \int_0^t\Or |\nb|\hat{v}_{\va}|^{s/2}|^2 dx dt'
+  \frac{c}{\eps} \int_0^t\Or |v_{\va}|^2|\hat{v}_{\va}|^{s-2} dx dt', \eenn
where the first integral is absorbed by the second term on the l.h.s. of (\ref{3.11})
and the second is bounded by
\benn \left(\int_0^t\Or |{v}_{\va}|^4 dx dt'\right)^{\frac{1}{2}}
\left(\int_0^t\Or |\hat{v}_{\va}|^{2(s-2)} dx dt'\right)^{\frac{1}{2}}=I_3\eenn

The first factor in $I_3$ is bounded in virtue of Remark~2.5 and the second factor is absorbed by
(\ref{3.12}) if $2(s-2)\le \frac{5}{3}s$ so $s\le 12.$
Summarizing, we obtain the estimate
\ben \label{3.16}
\|\hat{v}_{\va}\|_{L_{\frac{20}{3}}(\R^3\times(0,t))} \le c
\een
We observe that the estimate (\ref{3.16}) above is not strong enough to apply
Lemma~\ref{le-np2}(1) because  for $s=r$ it is required that $s\ge\frac{50}{7}.$

To increase regularity we introduce a new smooth cut-off function
\benn
\breve{\chi}(r) = \left\{ \begin{array}{lr} 1 & r\ge 3r_0 \\ 0 & r\le 2r_0    \end{array} \right.
 \eenn
Introducing notation $ \breve{v}_{\va}= {v}_{\va}\breve{\chi}$ we replace (\ref{3.10}) by
\benn 
\breve{v}_{\varphi,t}+v\cdot\nabla \breve{v}_\varphi -\nu\Delta \breve{v}_\varphi +\frac{v_r}{r}\breve{v}_\varphi
+ \nu \frac{\breve{v}_\varphi}{r^2} \\ = v \cdot\nb \breve{\chi} \hat{v}_{\va}
-2\nu \nb \hat{v}_{\va} \nb \breve{\chi} - \nu \hat{v}_{\va} \De \breve{\chi} \\
\breve{v}_{\va}|_{t=0} = \breve{v}_{\va}(0).
 \eenn
where we can use (\ref{3.16}). Hence increasing regularity of (\ref{3.16}) can be achieved
to meet assumptions of Lemma~\ref{le-np2}(1).
 This concludes the proof.
\begin{lemma} \label{le-3.3}
Assume that
\benn 
\int_0^T dt \Or \frac{\omega^2_{\va}}{r} dx < \infty
\eenn
Then there exists a constant $c$ such that
\ben \label{3.19}
\int_0^T dt \Or \frac{v_r^2}{r^3} dx \le c\int_0^T dt \Or \frac{\omega^2_{\va}}{r} dx
\een
\end{lemma}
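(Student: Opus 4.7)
The plan is to exploit the stream function representation together with a one-dimensional Hardy inequality in the radial direction and a weighted $L^2$ elliptic estimate for the operator appearing in (\ref{1.13}). Since by (\ref{1.14}) one has $v_r/r=-\psi_{1,z}$ and since $\psi_1$ satisfies the elliptic equation $-(\Delta\psi_1+\frac{2}{r}\psi_{1,r})=\omega_1$, the inequality (\ref{3.19}) reduces to a weighted $L^2$ bound of the form
\[
\int_{\R^3}\psi_{1,z}^2/r\,dx \;\le\; C\int_{\R^3} r\,\omega_1^2\,dx,
\]
i.e. to a bound for a derivative of the solution of the elliptic problem (\ref{1.13}) in a weighted norm.

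First I would establish a one-dimensional Hardy inequality. For a smooth axisymmetric solution the stream function satisfies $\psi\sim r^2$ at the axis, hence $\psi_1=\psi/r=O(r)$ and therefore $g:=v_r/r=-\psi_{1,z}$ vanishes at $r=0$. A direct integration by parts together with Cauchy--Schwarz gives $\int_0^\infty g^2\,dr\le 4\int_0^\infty |g_{,r}|^2 r^2\,dr$. Integrating over $z$ and converting the measure yields
\[
\int_{\R^3}\frac{v_r^2}{r^3}\,dx=\int_{\R^3}\frac{g^2}{r}\,dx\le 4\int_{\R^3}|g_{,r}|^2\,r\,dx\le 4\int_{\R^3}|\tilde\nabla(v_r/r)|^2\,r\,dx.
\]
To avoid the axis I would mimic the $\R^3_\eps$-approximation used in the proofs of Lemmas~\ref{le-2.6} and \ref{le-2.8} and pass to the limit $\eps\to 0$.

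Next I would provide the weighted counterpart of Remark~\ref{le-2.6} (i.e. of (\ref{2.11a})). The most transparent way is to view $\psi_1$ as an axisymmetric function on $\R^5$, since the operator $-(\Delta+\frac{2}{r}\partial_r)$ coincides with $-\Delta_5$ acting on functions of $(r,z)$ with $r=|y'|$, $y'\in\R^4$, $z=y_5$. The measure identity $dx=\pi^{-1} r^{-2}dy$ turns the bound we want into a weighted estimate on $\R^5$ with weight $r^{-1}=|y'|^{-1}$. Since $-4<-1<4(2-1)$, this weight belongs to $A_2(\R^5)$, so Calder\'on--Zygmund theory in weighted $L^2$ delivers
\[
\int_{\R^3}|\tilde\nabla(v_r/r)|^2 r\,dx\le \int_{\R^3}|\tilde\nabla^2\psi_1|^2 r\,dx\le C\int_{\R^3}|\Delta_5\psi_1|^2 r\,dx=C\int_{\R^3}\frac{\omega_\varphi^2}{r}\,dx.
\]
Combining the two displays and integrating over $t\in(0,T)$ gives (\ref{3.19}).

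The main obstacle lies in the last step: one has to verify the $A_2$-weighted second order estimate for $-\Delta_5$ on the subspace of axisymmetric (in $y'$) functions and to check that the approximation procedure at the axis preserves the required decay so that all boundary contributions (from $r=\eps$ and $r=\infty$) vanish in the limit. The Hardy step and the time integration are then routine.
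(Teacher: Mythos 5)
Your strategy is sound and, granting the two analytic inputs you isolate (the one--dimensional Hardy inequality in $r$ and the $A_2$--weighted Calder\'on--Zygmund estimate on $\R^5$), it does yield (\ref{3.19}); but it is a genuinely different and considerably heavier route than the paper's. The paper works with $u=\psi_{,z}=-v_r$ rather than with $\psi_{1,z}=-v_r/r$: it differentiates the stream-function equation in $z$ to get $-\De u+u/r^2=\om_{\va,z}$ on $\R^3_{\eps}$ and tests it with the single weighted test function $u/r^{\al}$. One integration by parts in $r$ converts the cross term $-\al\int\nb u\cdot\nb r\,u\,r^{-\al-1}dx$ into a nonnegative boundary term at $r=\eps$ plus $-\frac{\al^2}{2}\int u^2r^{-2-\al}dx$, so the zeroth-order term survives with coefficient $1-\al^2/2$, which equals $1/2$ at $\al=1$; after absorbing the right-hand side by Cauchy--Schwarz one obtains $\int\frac{v_r^2}{r^3}dx\le c\int\frac{\om_{\va}^2}{r}dx$ in a single stroke (this is (\ref{3.28}) with $\al=1$), and then lets $\eps\ra0$ and integrates in time. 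In other words, the target quantity $\int v_r^2/r^3\,dx$ appears directly as the lower-order term of a weighted energy identity, so neither a Hardy inequality nor any singular-integral theory is needed. Your version buys more --- a weighted second-order bound $\int|\tilde\nb(v_r/r)|^2r\,dx\le c\int\om_{\va}^2/r\,dx$, i.e.\ a weighted analogue of (\ref{2.11a}), together with the flexibility of the $A_p$ framework for other exponents and weights --- but at the price of justifying the weighted Calder\'on--Zygmund estimate for axisymmetric functions on $\R^5$ and the vanishing of the boundary terms in the Hardy step, which are exactly the obstacles you flag. Your bookkeeping is correct as far as it goes: the $A_2$ range $-4<-1<4$ for the weight $|y'|^{-1}$ with $y'\in\R^4$, the measure conversion $dx=\pi^{-1}r^{-2}dy$, and the identification of $-(\De+\frac{2}{r}\pa_r)$ with $-\De_{\R^5}$ are all right. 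If you want the elementary argument, both of your auxiliary lemmas can be replaced by the single test-function computation above.
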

\begin{proof}
From $(\ref{1.6-1.9})_3$ we have
\ben \label{3.20}
- \De\psi_{,z} + \frac{1}{r^2}\psi_{,z}= \omega_{\va,z},
\een
where
$v_r= -\psi_{,z}.$

To simplify further considerations we introduce the notation
\benn
u=\psi_{,z},\quad f= \om_{\va}
\eenn
Then (\ref{3.20}) takes the form
\ben \label{3.21} -\De u + \frac{1}{r^2} u= f_{,z}. \een
Recall that $I$ is the axis of symmetry. Let $C^{\infty}_0(\R^3\setminus I)$ be the set
of smooth functions vanishing near $I$ and outside a compact set.

Let $H^1_0(\R^3)$ be the closure of $C^{\infty}_0(\R^3\setminus I)$ in the norm
\benn \|u\|_{H^1_0(\R^3)}= \left(\Or (|\nb u|^2 + \frac{1}{r^2}|u|^2) dx\right)^{\frac{1}{2}}.
\eenn
Recall that functions from $H^1_0(\R^3)$ vanish on $I.$ By the weak solution to (\ref{3.21}) we
mean a function $u\in H^1_0(\R^3)$ satisfying the integral identity
\ben \label{3.22} \Or (\nb u\cdot \nb \chi + \frac{1}{r^2} u \chi)dx = - \Or f \chi_{,z} dx\een
which holds for any smooth function $\chi\in C^{\infty}_0(\R^3\setminus I)$ belonging to
$H^1_0(\R^3).$ Introducing the scalar product
\ben \label{3.23}
(u,v)_{H^1_0(\R^3)} = \Or (\nb u\cdot \nb v + \frac{1}{r^2} u v) dx
\een
we can write (\ref{3.22}) in the following short form
\benn 
 (u,\chi)_{H^1_0(\R^3)} = -(f, \chi_{,z})_{L_2(\R^3)} \eenn
where \benn (u,v)_{L_2(\R^3)} = \Or uv dx. \eenn
For $f\in L_2(\R^3),$ $ (f, \chi_{,z})_{L_2(\R^3)}$ is a linear functional on $H^1_0(\R^3).$
Hence we have
\benn |(f, \chi_{,z})_{L_2(\R^3)}| \le \|f\|_{L_2(\R^3)}\|\chi_{,z}\|_{L_2(\R^3)}
\le \|f\|_{L_2(\R^3)}\|\chi\|_{H^1_0(\R^3)}. \eenn
Hence by the Riesz Theorem there exists $F\in H^1_0(\R^3)$ such that
\benn -(f, \chi_{,z})_{L_2(\R^3)} = (F, \chi)_{H^1_0(\R^3)}. \eenn
Therefore there exists a solution to the integral identity (\ref{3.23}) such that $u=F\in H^1_0(\R^3)$
and the estimate holds
\benn 
\|u||_{H^1_0(\R^3)} \le c\|f\|_{L_2(\R^3)}. \eenn
It is clear that the solution is unique.

Since $u$ vanishes for $r=0,$ $u=\psi_{,z}$ so $\psi|_{r=0}=0$ also. Therefore we can
look for approximate weak solution satisfying the integral identity
\ben \label{3.26} \int_{\R^3_{\eps}} (\nb u \cdot\nb\chi + \frac{1}{r^2} u \chi) dx,
= - \int_{\R^3_{\eps}} f \chi_{,z} \een
where $\R^3_{\eps}= \{x\in \R^3: r>\eps \}, \eps>0.$ Let $\chi=\frac{\psi_{,z}}{r^{\al}}.$
Then recalling notation $u=\psi_{,z}, f=\om_{\va}$ identity (\ref{3.26}) takes the form
\benn  \int_{\R^3_{\eps}}(\nb \psi_{,z}\frac{\nb{\psi_{,z}}}{r^{\al}} +
\frac{1}{r^{2+\al}}|\psi_{,z}|^2)dx
= - \int_{\R^3_{\eps}} \frac{\om_{\va}}{r^{\al}}\psi_{,zz}. dx  \eenn
Performing differentiation we have
\ben \label{3.27} \bal
\int_{\R^3_{\eps}} \frac{1}{r^{\al}}|\nb\psi_{,z}|^2 dx -\al \int_{\R^3_{\eps}}\nb \psi_{,z}\psi_{,z} \nb r \ r^{-\al-1}dx
\\ + \int_{\R^3_{\eps}}\frac{\psi_{,z}^2}{r^{2+\al}}dx
 = -\int_{\R^3_{\eps}} \frac{\omega_{\va}}{r^{\al}}\psi_{,zz}dx.
\eal \een

The second integral on the l.h.s. of (\ref{3.27}) takes the form
\benn
-\al \int_{\R^3_{\eps}}\pa_r \psi_{,z}\psi_{,z} r^{-\al-1} r dr dz
= -\frac{\al}{2} \int_{\R^3_{\eps}}(\psi_{,z}^2)_{,r} r^{-\al} dr dz \\
= -\frac{\al}{2}\int_{\R^3_{\eps}} \pa_r (\psi_{,z}^2r^{-\al}) dr dz
- \frac{\al^2}{2}\int_{\R^3_{\eps}} \psi_{,z}^2 r^{-\al-1} dr dz
\\ = \frac{\al}{2}\int_{\R^3_{\eps}}\psi_{,z}^2r^{-\al}|_{r=\eps}dz
- \frac{\al^2}{2}\int_{\R^3_{\eps}} \psi_{,z}^2 r^{-\al-2} dx.
\eenn
Using this in (\ref{3.27}) and applying the H\"{o}lder and Young inequalities to the r.h.s.
term yields
\ben  \label{3.28} \int_{\R^3_{\eps}}\frac{1}{r^{\al}}|\nb\psi_{,z}|^2 dx
+\left(1-\frac{\al^2}{2}\right) \int_{\R^3_{\eps}}\frac{\psi_{,z}^2}{r^{2+\al}} dx
\le c \int_{\R^3_{\eps}} \frac{\omega_{\va}^2}{r^{\al}}dx.
\een
We have to emphasize that $\psi$ in (\ref{3.28}) is an approximate function. This should be denoted
with $\psi^{\eps}$ but we omitted it for simplicity.

Passing with $\eps \ra 0$, setting $\al=1$ and integrating this inequality with respect to time
implies (\ref{3.19}). This concludes the proof.
\end{proof}

Proof of Theorem~1
\begin{proof}
From (\ref{2.10}) and (\ref{2.11a}) we have
\benn  \left\|\frac{v_{\va}^2}{r}\right\|_{L_2(\R^3)}^2
+ \int_0^t \left\|{\nb}\frac{v_{\va}^2}{r}\right\|_{L_2(\R^3)}^2 dt' \\ \le
\exp(c \int_0^t \left\|\frac{\om_{\va}}{r}\right\|_{L_2(\R^3)}^{4/3}dt')\cdot
\left\|{\nb}\frac{v_{\va}^2(0)}{r}\right\|_{L_2(\R^3)}^2, \quad t\le T.
 \eenn
Next (\ref{fi-ga}) implies
\ben \label{23} \bal \left\|\frac{\om_r}{r}\right\|_{L_2(\R^3)}^2
+ \left\|\frac{\om_{\va}}{r}\right\|_{L_2(\R^3)}^2
\\ \le c \exp(\|u_{\al}\|_{L_{p,q}(\R^3\times(0,t))})
\left( \left\|\frac{\om_r(0)}{r}\right\|_{L_2(\R^3)}^2+
\left\|\frac{\om_{\va}(0)}{r}\right\|_{L_2(\R^3)}^2 \right) , \eal \een
\benn \frac{3}{p}+ \frac{2}{q} =\al,\ \  \al \le 1, \ t\le T.\eenn
Finally, in view of (\ref{1.10}) and Lemma~3.1 we get
\benn
 \|u_{\al}\|_{L_{p,q}(\R^3\times(0,t))} \le c(\|u(0)\|_{L_{\infty}(\R^3)}+ \|u(0)\|_{L_{s}(\R^3)}),
\eenn
where $\al=\frac{3}{s}, s\ge 3, t\le T.$
These estimates imply that
\benn
 \|v_{\al}\|_{L_{p',q'}(\R^3_{r_0}\times(0,t))}
\le h\left(\left\|\frac{v^2_{\va}(0)}{r}\right\|_{L_2(\R^3)},
\left\|\frac{\om_{\va}(0)}{r}\right\|_{L_2(\R^3)}, \right.
\\ \left. \left\|\frac{\om_{r}(0)}{r}\right\|_{L_2(\R^3)},   \|u(0)\|_{L_{\infty}(\R^3)},
\|u(0)\|_{L_{s}(\R^3)}\right), \quad \frac{3}{p'}+ \frac{2}{q'} \ge \frac{3}{4}
 \eenn
where $h$ is some positive increasing function of its arguments. Hence Lemma~2.3 implies local regularity.

To make statement more explicit we obtain from (\ref{23}) for $r< r_0$
and from Step 5 of the proof of Theorem~1 from \cite{NP2} that
\benn \|\om \|_{L_{\infty}(0,T;L_2(\R^3_{r_0}))} \le C(data)\eenn
\noindent where $data$ are data from the assumptions of the Step 1 of the proof of Theorem~1 from \cite{NP2}.

Considering the problem \benn \rot v = \om \\ \Div v =0 \eenn
and the local technique from \cite{LSU}, Ch.4, Sect.10, we have
\benn \|v \|_{L_{\infty}(0,T;L_6(\R^3_{r_0}))} \le C(data)\eenn
Consider the problem
\benn
v_t -\nu \De v + \nb p= -v \cdot \nb v \\
\Div v =0 \\
v|_{t=0} = v(0),
\eenn
Employing the result of Solonnikov, estimate for $v$ above and some interpolation we get
\benn \|v\|_{W^{2,1}_{\si, r}(\R^3_0 \times(0,T))} \le c(data)
+ c \|v\|_{B^{2-2/r}_{\si, r}(\R^3_0 \times(0,T))}\eenn
where $\si <6, r$ arbitrary. This proves the second part of Theorem~1.
\end{proof}

Proof of Theorem~2 follows from Theorem~1 and Lemma~\ref{le-3.3} .


\begin{thebibliography}{GK2}
\bibitem[CFZ]{CFZ}
             {\sc H. Chen,} {\sc D. Fang} and {\sc T. Zhang},
             \emph{Regularity of 3d axisymmetric Navier-Stokes equations.}\/,
             \newblock Discr. and Cont. Dyn. Syst. {\bf 37}(4) (2017), 1923-1939.

\bibitem[CKN]{CKN}
\newblock {\sc L. Caffarelli,} {\sc R. Kohn} and {\sc L. Nirenberg},
  \newblock  \emph{Partial regularity of suitable weak solutions of the Navier-Stokes equations.}
  \newblock Comm. Pure Appl. Math. 35 (1982) 771--831.

\bibitem[CL]{CL}
\newblock {\sc D. Chae} and {\sc J. Lee},
  \newblock  \emph{On the regularity of the axisymmetric solutions to the Navier-Stokes equations.}
  \newblock  Math. Z. {\bf 239} (4) (2002), 645-671.

\bibitem[CT]{CT}
\newblock {\sc C. Cao} and {\sc E. S. Titi},
  \newblock  \emph{Regularity criteria for the three-dimensional Navier-Stokes equations.}
  \newblock Indiana Univ. Math. J. {\bf 57} (2008), no. 6, 2643-2661.

\bibitem[L]{L}
      \newblock{\sc O. A. Ladyzhenskaya},
      \newblock \emph{Unique global solvability of the three-dimensional Cauchy problem
for the Navier-Stokes equations in the presence of axial symmetry},
     \newblock Zap. Nauchn. Sem. Leningrad. Otdel. Math. Inst. Steklov. (LOMI) {\bf 7} (1968), 155-177 (in Russian).

\bibitem[LSU]{LSU}
      \newblock{\sc O. A. Ladyzhenskaya}, {\sc V. A. Solonnikov} and {\sc N. N. Uraltseva},
      \newblock \emph{Linear and quasilinear equations of parabolic type},
      \newblock Nauka, Moscow, 1967 (in Russian).

\bibitem[LZ]{LZ}
           \newblock  {\sc Z. Lei} and {\sc S. Zhang},
            \newblock  \emph{Criticality of the Axially Symmetric Navier-Stokes Equations.} arXiv:1505.02628v2 [math.AP]

\bibitem[KP]{KP}
           \newblock  {\sc O. Kreml} and {\sc M. Pokorny},
            \newblock  \emph{A regularity criterion for the angular velocity component in axisymmetric Navier-Stokes equations.},
               \newblock Electron. J. Diff. Equas.{\bf 8} (2007), pp.10.

\bibitem[KPZ]{KPZ}
           \newblock  {\sc A. Kubica}, {\sc M. Pokorny} and {\sc W. M. Zaj\c aczkowski},
            \newblock  \emph{Remarks on regularity criteria for axially symmetric
            weak solutions to the Navier-Stokes equations.},
               \newblock Math. Meth. Appl. Sci. {\bf 35} (2012), 360--371.

\bibitem[LMNP]{LMNP}
          \newblock {\sc S. Leonardi}, {\sc J. Malek}, {\sc J. Necas}, and {\sc M. Pokorny},
          \newblock  \emph{On axially symmetric flows in $R^3$},
          \newblock Z. Anal. Anwendungen, {\bf 18} (1999), 639-�649.

\bibitem[NP1]{NP1}
           \newblock  {\sc J. Neustupa} and {\sc M. Pokorny},
            \newblock  \emph{Axisymmetric flow for Navier-Stokes fluid in the whole space with non-zero angular velocity component.},
               \newblock Math. Bohem {\bf 126} (2001), 469--481.

\bibitem[NP2]{NP2}
           \newblock  {\sc J. Neustupa} and {\sc M. Pokorny},
            \newblock  \emph{ An interior regularity criterion for an axially symmetric
           suitable weak solution to the Navier-Stokes equations.},
         \newblock J. Math. Fluid Mech., {\bf 2} (2000), 381-�399.
\bibitem[T]{T}
         \newblock  {\sc R. Temam},
         \newblock \emph{Navier-Stokes Equation, theory and numerical analysis},
         \newblock Studies in Mathematics and Applications, vol 2, North-Holland Publishibg Company 1979.
\bibitem[YU]{YU}
           \newblock  {\sc V. I. Yudovich} and {\sc M. R. Ukhovskii},
           \newblock \emph{Axially symmetric flows of ideal and viscous fluids filling the whole space.},
          \newblock J. Appl. Math. Mech., {\bf 32} (1968), 52�61.

 \bibitem[Z]{Z}
       \newblock      {\sc W.M. Zaj\c{a}czkowski},
        \newblock \emph{A regularity criterion for axially symmetric solutions to the Navier-Stokes equations, }
         \newblock   Zap. Nauchn. Sem. S.-Peterburg. Otdel. Mat. Inst. Steklov. (POMI)
        {\bf 385} (2010), 54--68.
\end{thebibliography}
\end{document}